\numberwithin{equation}{section}
\newtheorem{Lem}[equation]{Lemma}
\newtheorem{Prop}[equation]{Proposition}
\newtheorem{Thm}[equation]{Theorem}
\newtheorem{Cor}[equation]{Corollary}
\theoremstyle{definition}\newtheorem{Def}[equation]{Definition}
\newtheorem{Constr}[equation]{Construction}
\newtheorem{Not}[equation]{Notation}
\newtheorem{DefNot}[equation]{Definitions-Notations}
\newtheorem{Ex}[equation]{Example}
\newtheorem{Rem}[equation]{Remark}
\newtheorem{Ques}[equation]{Question}
\newcommand{\Q}{\mathbb{Q}}
\newcommand{\Z}{\mathbb{Z}}
\newcommand{\F}{\mathbb{F}}
\newcommand{\N}{\mathbb{N}}
\newcommand{\Int}{\textnormal{Int}}
\newcommand{\bfi}{\mathbf{i}}
\newcommand{\bfj}{\mathbf{j}}
\newcommand{\bfk}{\mathbf{k}}
\newcommand{\mcP}{\mathcal{P}}
\newcommand{\msP}{\mathscr{P}}
\journal{Journal of Algebra}
\def\ps@pprintTitle{%
     \let\@oddhead\@empty
     \let\@evenhead\@empty
     \def\@oddfoot{\reset@font\hfil\thepage\hfil%
     \llap{\footnotesize\itshape\today}}
     \let\@evenfoot\@oddfoot}
\begin{document}

\begin{frontmatter}

\title{Properly Integral Polynomials over the Ring of Integer-valued Polynomials on a Matrix Ring\\
\vskip0.3cm
\small{to appear in J. Algebra (2016)} 
}

\author{Giulio Peruginelli}
\ead{gperugin@math.unipd.it}
\address{Department of Mathematics, University of Padova, Via Trieste, 63
35121 Padova, Italy}

\author{Nicholas J. Werner}
\ead{wernern@oldwestbury.edu}
\address{Department of Mathematics, Computer and Information Science, SUNY College at Old Westbury, Old Westbury, NY 11568}

\begin{abstract}
\noindent Let $D$ be a domain with fraction field $K$, and let $M_n(D)$ be the ring of $n \times n$ matrices with entries in $D$. The ring of integer-valued polynomials on the matrix ring $M_n(D)$, denoted $\Int_K(M_n(D))$, consists of those polynomials in $K[x]$ that map matrices in $M_n(D)$ back to $M_n(D)$ under evaluation. It has been known for some time that $\Int_\Q(M_n(\Z))$ is not integrally closed. However, it was only recently that an example of a polynomial in the integral closure of $\Int_\Q(M_n(\Z))$ but not in the ring itself appeared in the literature, and the published example is specific to the case $n=2$. In this paper, we give a construction that produces polynomials that are integral over $\Int_K(M_n(D))$ but are not in the ring itself, where $D$ is a Dedekind domain with finite residue fields and $n \geq 2$ is arbitrary. We also show how our general example is related to $P$-sequences for $\Int_K(M_n(D))$ and its integral closure in the case where $D$ is a discrete valuation ring.\\

\noindent Keywords: Integer-valued polynomial, Integral closure, Null ideal, Matrix ring, $P$-sequence

\noindent MSC Primary 13F20 Secondary 13B22, 11C99
\end{abstract}
\end{frontmatter}

\section{Introduction}
When $D$ is a domain with field of fractions $K$, the ring of integer-valued polynomials on $D$ is $\Int(D) = \{f \in K[x] \mid f(D) \subseteq D\}$. Such rings have been extensively studied over the past several decades; the reader is referred to \cite{CahCha} for standard results on these objects. More recently, attention has turned to the consideration of integer-valued polynomials on algebras \cite{ChabPer, EvrFarJoh, EvrJoh, Fri, FriCorr, Fri3, LopWer, PerDivDiff, Per, PerIntvalalg, PerWer, Wer3}. The typical approach for this construction is to take a torsion-free $D$-algebra $A$ that is finitely generated as a $D$-module and such that $A \cap K = D$. Then, we define $\Int_K(A)$ to be the set of polynomials in $K[x]$ that map elements of $A$ back to $A$ under evaluation. That is, $\Int_K(A) := \{f \in K[x] \mid f(A) \subseteq A\}$, which is a subring of $\Int(D)$. (Technically, evaluation of $f \in K[x]$ at elements of $A$ is performed in the tensor product $K \otimes_D A$ by associating $K$ and $A$ with their canonical images $K \otimes 1$ and $1 \otimes A$. In practice, however, it is usually clear how to perform the evaluation without the formality of tensor products.)

Depending on the choice of $A$, the ring $\Int_K(A)$ can exhibit similarities to, or stark differences from, $\Int(D)$. For instance, if $A$ is the ring of integers of a number field (viewed as a $\Z$-algebra), then $\Int_\Q(A)$ is---like $\Int(\Z)$---a Pr\"{u}fer domain \cite[Thm.\ 3.7]{LopWer}, hence is integrally closed. In contrast, when $A = M_n(\Z)$ is the algebra of $n \times n$ matrices with entries in $\Z$, $\Int_\Q(A)$ is not integrally closed (although its integral closure is a Pr\"{u}fer domain) \cite[Sec.\ 4]{LopWer}. In a more general setting, it is known \cite[Thm.\ VI.1.7]{CahCha} that if $D$ is a Dedekind domain with finite residue fields, then $\Int(D)$ is a Pr\"{u}fer domain, and so is integrally closed. The motivation for this paper was to show, by giving a form for a general counterexample, that $\Int_K(M_n(D))$ is not integrally closed. In this vein, we make the following definition.

\begin{Def}\label{Properly integral}
A polynomial $f \in K[x]$ will be called \textit{properly integral} over $\Int_K(A)$ if $f$ lies in the integral closure of $\Int_K(A)$, but $f \notin \Int_K(A)$.
\end{Def}

Note that the integral closure of $\Int_K(A)$ in its field of fractions $K(x)$ is contained in $K[x]$, so that $\Int_K(A)$ is integrally closed if and only if there are no properly integral polynomials over $\Int_K(A)$. It has been known for some time that $\Int_\Q(M_n(\Z))$ is not integrally closed. However, the first published example of a properly integral polynomial over $\Int_\Q(M_n(\Z))$ was given only recently by Evrard and Johnson in \cite{EvrJoh}, and only for the case $n = 2$. We will give a general construction for a properly integral polynomial over $\Int_K(M_n(D))$, where $D$ is a Dedekind domain with finite residue rings, and $n \geq 2$ is arbitrary.

The theorems in this paper can be seen as complementary to the work of Evrard and Johnson. Their results relied heavily on the $P$-orderings and $P$-sequences of Bhargava \cite{Bha} and the generalizations of these in \cite{Joh}. In the case where $D = \Z$, a properly integral polynomial $f(x) = g(x)/p^k$ (where $g \in \Z[x]$, $p^k$ is a prime power, and $p$ does not divide $g$) over $\Int_\Q(M_n(\Z))$ produced by using the methods and $p$-sequences in \cite{EvrJoh} is optimal in the sense that $f$ has minimal degree among all properly integral polynomials of the form  $g_1(x)/p^{k_1}$, where $k_1>0$.  
However, building such an $f$ requires knowing the $p$-sequences for $\Int_\Q(M_n(\Z))$ and its integral closure. In general, these sequences are  quite difficult to determine; to date, formulas for such $p$-sequences have been given only in the case $n=2$. In contrast, our construction gives a properly integral polynomial for a much larger variety of rings and does not require a $P$-sequence, but it is only known to be optimal when $n=2$ and $D=\Z$---a fact we can prove precisely because of the $p$-sequences derived in \cite{EvrJoh}.

The paper proceeds as follows. Section \ref{Construction section} begins with a concrete construction for a properly integral polynomial over $\Int_K(M_n(D))$ that works when $D$ is a discrete valuation ring (DVR). This local result is then globalized (Theorem \ref{Dedekind case}) in Section \ref{Global section} to the case where $D$ is a Dedekind domain. We also point out (Corollary \ref{Algebra case}) that the same construction works for some algebras that are not matrix rings. Section \ref{Null ideals} relates our work to the $P$-sequences used by Evrard and Johnson.  We generalize (Theorem \ref{Phi thm}) a classical theorem known to Dickson \cite[Thm.\ 27, p.\ 22]{Dickson} concerning the ideal of polynomials in $\Z[x]$ whose values over $\Z$ are divisible by a fixed prime power $p^k$, $k\leq p$, and use this generalization to give a concise formula (Corollary \ref{Formula for mu_d}) for the initial terms of the $P$-sequence for $\Int_K(M_n(V))$, where $V$ is a DVR and $n \geq 2$. Finally, by utilizing the formulas given in \cite{EvrJoh} for the $p$-sequences of the integral closure of $\Int_\Q(M_2(\Z))$, we prove that the polynomials produced by our construction are optimal (in the sense of the previous paragraph) when $D = \Z$ and $n = 2$ (Corollary \ref{When F is optimal}).

\section{Construction of the Properly Integral Polynomial}\label{Construction section}

Let $V$ be a discrete valuation ring (DVR) with maximal ideal $\pi V$, field of fractions $K$, and finite residue field $V/\pi V \cong \F_q$. Fix an algebraic closure $\overline{K}$ of $K$ and for each $n \geq 2$, let $\Lambda_n(V)$ be the set of elements of $\overline{K}$ whose degree over $V$ is at most $n$. For each $\alpha \in \Lambda_n(V)$, we let $O_\alpha$ be the integral closure of $V$ in $K(\alpha)$. 

We know \cite[Cor.\ 16]{PerWer} that the integral closure of $\Int_K(M_n(V))$ is equal to
\begin{equation*}
\Int_K(\Lambda_n(V)) := \{ f \in K[x] \mid f(\Lambda_n(V)) \subseteq \Lambda_n(V)\}.
\end{equation*}
Note that since $\Lambda_n(V)\cap K(\alpha)=O_{\alpha}$, we have $f \in \Int_K(\Lambda_n(V))$ if and only if $f(\alpha) \in O_\alpha$ for each $\alpha \in \Lambda_n(V)$. Here, we will give a general construction for a polynomial $F$ that is properly integral over $\Int_K(M_n(V))$; that is, $F \in \Int_K(\Lambda_n(V)) \setminus \Int_K(M_n(V))$. The idea behind the construction is as follows.

A polynomial $f \in K[x]$ is integer-valued on $M_n(V)$ if and only if it is integer-valued on the set of $n \times n$ companion matrices in $M_n(V)$ \cite[Thm.\ 6.3]{Fri}. It turns out that if $f$ is integer-valued on ``enough'' companion matrices, then it can still lie in $\Int_K(\Lambda_n(V))$. However, as long as $f$ is not integer-valued on at least one companion matrix, $f$ will not be in $\Int_K(M_n(V))$. So, we will build a polynomial that is integer-valued on almost all of the companion matrices in $M_n(V)$; specifically, our polynomial will fail to be integer-valued on the set of companion matrices whose characteristic polynomial mod $\pi$ is a power of a linear polynomial.

As part of our construction, we will lift elements from $\F_q$ or $\F_q[x]$ up to $V$ or $V[x]$. To be precise, one should first pick residue representatives for $\F_q$ and $\F_q[x]$, and then use these in all calculations taking place over $V$. However, to ease the notation, we will write $\F_q$ throughout. When a calculation must be performed over the finite field, we will say it occurs ``mod $\pi$'' or ``in $\F_q$''.

\begin{Constr}\label{The construction}
Let $n \geq 2$. Let 
\begin{align*}
\msP &= \{f \in \F_q[x] \mid f \text{ is monic, irreducible, and } 2 \leq \deg f \leq n\},\\
\theta(x) &= \prod_{f \in \msP} f(x)^{\lfloor n/\deg f \rfloor},\\ 
h(x) &= x^{n-1}\prod_{a \in \F_q^\times}(x^n + \pi a),\\
H(x) &= \prod_{b \in \F_q} h(x-b), \text{ and finally}\\
F(x) &= \dfrac{H(x)(\theta(x))^q}{\pi^q}.
\end{align*}
\end{Constr}

The simplest example for $F$ occurs when $n=2$ and $V = \Z_{(2)}$, so that $q = 2$. Then, we have
\begin{equation*}
F(x) = \dfrac{x(x^2+2)(x-1)((x-1)^2+2)(x^2+x+1)^2}{4}.
\end{equation*}

Our main result is the following.

\begin{Thm}\label{Big thm}
Let $F$ be as in Construction \ref{The construction}. Then, $F$ is properly integral over $\Int_K(M_n(V))$.
\end{Thm}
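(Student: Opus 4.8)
The plan is to prove separately the two assertions that together constitute the theorem: (a) $F \in \Int_K(\Lambda_n(V))$, so that $F$ is integral over $\Int_K(M_n(V))$; and (b) $F \notin \Int_K(M_n(V))$. For (a) I will use the characterization recalled above, namely that $F \in \Int_K(\Lambda_n(V))$ exactly when $F(\alpha) \in O_\alpha$ for every $\alpha \in \Lambda_n(V)$. So fix $\alpha \in \Lambda_n(V)$ and put $d = [K(\alpha):K] \leq n$. Since $O_\alpha$ is the intersection of its localizations at maximal ideals and $H(\alpha)\theta(\alpha)^q \in O_\alpha$, the membership $F(\alpha) = H(\alpha)\theta(\alpha)^q/\pi^q \in O_\alpha$ is automatic at every maximal ideal not containing $\pi$; so it is enough to fix a maximal ideal $\mathfrak m$ of $O_\alpha$ with $\pi \in \mathfrak m$, write $w$ for the associated normalized valuation (so $w(\pi) = e$, the ramification index, and the residue field is $\F_{q^f}$ with $ef \leq d \leq n$), and prove $w(H(\alpha)) + q\,w(\theta(\alpha)) \geq qe$. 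Write $\bar\alpha$ for the image of $\alpha$ in $\F_{q^f}$. Suppose first that $\bar\alpha \notin \F_q$. Then $\bar H = (x^q - x)^{nq-1}$ in $\F_q[x]$, so $\overline{H(\alpha)} = (\bar\alpha^q - \bar\alpha)^{nq-1} \neq 0$ and $w(H(\alpha)) = 0$; on the other hand the minimal polynomial $m$ of $\bar\alpha$ over $\F_q$ has $f' := \deg m = [\F_q(\bar\alpha):\F_q]$ with $2 \leq f' \leq f \leq n$, hence $m \in \msP$ and $m$ occurs in $\theta$ to the power $\lfloor n/f'\rfloor$. The decisive numerical observation is that $ef' \leq ef \leq n$ forces $e \leq \lfloor n/f'\rfloor$. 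Lifting $m$ to a monic $M \in V[x]$ and writing $\theta = M^{\lfloor n/f'\rfloor}R + \pi S$ with $\overline R(\bar\alpha) \neq 0$, we get $w(M(\alpha)) \geq 1$ and $w(R(\alpha)) = 0$, so $w(\theta(\alpha)) \geq \min(\lfloor n/f'\rfloor\,w(M(\alpha)),\, e) \geq e$; the inequality $w(H(\alpha)) + q\,w(\theta(\alpha)) \geq qe$ follows.

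Now suppose $\bar\alpha = \bar b$ with $b \in \F_q$. Every irreducible factor of $\theta$ has degree at least $2$, so $\theta(\bar b) \neq 0$ and $w(\theta(\alpha)) = 0$; and for $c \in \F_q$ with $c \neq b$ the factor $h(\alpha - c)$ of $H(\alpha)$ is a unit because $\bar h = x^{nq-1}$ gives $\overline{h(\alpha-c)} = (\bar b - \bar c)^{nq-1} \neq 0$. Hence the problem reduces to showing $w(h(\beta)) \geq qe$, where $\beta := \alpha - b$ satisfies $w(\beta) \geq 1$. From
\[ w(h(\beta)) = (n-1)\,w(\beta) + \sum_{a \in \F_q^\times} w(\beta^n + \pi a) \]
and the bound $1 \leq e \leq n$, a short case analysis on the sign of $n\,w(\beta) - e$ finishes the argument: if $n\,w(\beta) > e$, every summand on the right equals $e$, and $(n-1)\,w(\beta) + (q-1)e \geq qe$ (using $e \leq n-1$, and $w(\beta) \geq 2$ in the totally ramified case $e = n$); if $n\,w(\beta) = e$, then necessarily $e = n$, $f = 1$, $w(\beta) = 1$, exactly one $a_0 \in \F_q^\times$ satisfies $w(\beta^n + \pi a_0) \geq e+1$ while the other $q-2$ summands equal $e$, and the bound again holds (with equality); and $n\,w(\beta) < e$ is impossible, since it would force $w(\beta) < e/n \leq 1$. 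This proves (a).

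For (b) it is enough to exhibit one matrix $A \in M_n(V)$ with $F(A) \notin M_n(V)$; take $A$ to be the companion matrix of $x^n$, so that $A^n = 0$ and, writing $F = \sum_i F_i x^i$, the first column of $F(A) = \sum_{i=0}^{n-1}F_i A^i$ is $(F_0, \dots, F_{n-1})^{\top}$. It therefore suffices to show $F_{n-1} \notin V$. The product $\prod_{a \in \F_q^\times}(x^n + \pi a)$ has constant term $\pi^{q-1}\prod_{a\in\F_q^\times}a$, a unit times $\pi^{q-1}$, and no other nonzero term of degree below $n$; so $h(x) = x^{n-1}\prod_{a}(x^n + \pi a)$ has $x^{n-1}$-coefficient a unit times $\pi^{q-1}$ and no nonzero term of smaller degree. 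Since $h(-b)$ is a unit for every $b \in \F_q^\times$ and $\theta(0)$ is a unit, multiplying $h(x)$ first by $\prod_{b \neq 0}h(x-b)$ and then by $\theta(x)^q$ leaves the $x^{n-1}$-coefficient a unit times $\pi^{q-1}$; dividing by $\pi^q$, we see $F_{n-1}$ is a unit times $\pi^{-1}$, hence $F_{n-1} \notin V$. Thus $F(A) \notin M_n(V)$ and $F \notin \Int_K(M_n(V))$, which completes the proof.

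The delicate part is case (a) with $\bar\alpha \in \F_q$: the exponents built into $h$ (the factor $x^{n-1}$ together with the $q-1$ factors $x^n + \pi a$) and into $\theta$ (the powers $\lfloor n/\deg f\rfloor$) are tuned so that each of the inequalities $e \leq \lfloor n/f'\rfloor$, $e \leq n-1$, and the ``$+1$'' gained from the unique cancelling factor in the totally ramified case is precisely what is needed. I expect that sorting out the sub-cases $n\,w(\beta) \gtrless e$---especially the boundary case $e = n$, where the estimate is met with equality---will be the main point requiring care.
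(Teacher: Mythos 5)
Your proof is correct, and for the integrality half it takes a genuinely different route from the paper's. The paper splits on whether the minimal polynomial of $\alpha$ is congruent to $(x-a)^n$ mod $\pi$: in the hard case it uses a symmetric-function argument on the conjugates of $\alpha$ (Lemma \ref{Symm poly lem}) to show that specific elements such as $\alpha^{n-1}/\pi$ and $\alpha^{n-1}(\alpha^n+\pi a)/\pi^2$ are integral over $V$ (Propositions \ref{Alg int prop} and \ref{Alg int prop 2}), and in the remaining case it factors the numerator into $q$ pieces each divisible by $m$ mod $\pi$ (Lemma \ref{Big thm second lemma}). You instead localize at each maximal ideal of $O_\alpha$ above $\pi$ and run a valuation count, with the fundamental inequality $ef \leq [K(\alpha):K] \leq n$ supplying exactly the bounds ($e \leq \lfloor n/f'\rfloor$ for the $\theta$-factors, $e \leq n$ for the $h$-factors) that make the exponents in the construction work; your boundary sub-case $n\,w(\beta) = e$, where the unique cancelling factor $\beta^n + \pi a_0$ contributes the one extra unit of valuation and the bound $w(h(\beta)) \geq qe$ is met with equality, plays precisely the role of part (2) of Proposition \ref{Alg int prop}, and your treatment of it is sound. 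Your non-integrality half is essentially the paper's Lemma \ref{Big thm first lemma} (evaluation at the companion matrix of $x^n$), merely rephrased in terms of the coefficient of $x^{n-1}$. What the paper's approach buys is that it needs nothing about the splitting of $\pi$ in $O_\alpha$ beyond integral closedness; what yours buys is a uniform local computation that makes transparent why each exponent in $h$ and $\theta$ is exactly what is required.
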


\begin{Rem}\label{Uniqueness remark}
Even if one specifies a degree $d_0$ and a denominator $d$, properly integral polynomials of the form $g(x)/d$ with $\deg g = d_0$ are not unique. Indeed, \cite[Cor.\ 3.6]{EvrJoh} shows that
\begin{equation*}
G(x) = \dfrac{x(x^2+2x+2)(x-1)(x^2+1)(x^2-x+1)(x^2+x+1)}{4}
\end{equation*}
is properly integral over $\Int_\Q(M_2(\Z_{(2)}))$, and clearly $G$ is not equal to the $F$ given above. However, one may prove the following. Let $I$ be the ideal of $\Z_{(2)}[x]$ generated by $4$ and $2x^2(x-1)^2(x^2+x+1)$. If $G_1, G_2 \in \Z_{(2)}[x]$ are both monic of degree 10 and both $F_1 = G_1/4$ and $F_2 = G_2/4$ are properly integral over $\Int_\Q(M_2(\Z_{(2)}))$, then $G_1$ and $G_2$ are equivalent modulo $I$. As one may check, this is the case with $F$ and $G$. Similar equivalences are possible for other choices of $V$, $d_0$, and $d$.
\end{Rem}

One part of Theorem \ref{Big thm} is easy to prove. The remaining parts are more involved, and the proof is completed at the end of Section \ref{Construction section}.

\begin{Lem}\label{Big thm first lemma}
$F\notin \Int_K(M_n(V))$.
\end{Lem}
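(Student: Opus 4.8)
The plan is to show that $F$ fails to be integer-valued on a single well-chosen companion matrix, namely one whose characteristic polynomial reduces mod $\pi$ to a power of a linear polynomial. By \cite[Thm.\ 6.3]{Fri}, it suffices to exhibit one companion matrix $C \in M_n(V)$ with $F(C) \notin M_n(V)$; equivalently (passing to $K[x]/(\chi)$ where $\chi$ is the characteristic polynomial of $C$, which is a free $V$-module with basis the powers of $x$), it suffices to find a monic $\chi \in V[x]$ of degree $n$ such that $F(x) \bmod \chi(x)$ does not have all coefficients in $V$ when written in the basis $1, x, \dots, x^{n-1}$.

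First I would take the simplest candidate: $\chi(x) = x^n$, so that $C$ is the nilpotent companion matrix and evaluating $F$ at $C$ amounts to reducing $F$ modulo $x^n$ and clearing the denominator $\pi^q$. I need to understand $H(x)(\theta(x))^q \bmod x^n$. Note $h(x) = x^{n-1}\prod_{a \in \F_q^\times}(x^n + \pi a)$, so modulo $x^n$ we have $h(x) \equiv x^{n-1}\prod_{a}(\pi a) = x^{n-1}\pi^{q-1}\prod_{a \in \F_q^\times} a = -x^{n-1}\pi^{q-1}$ (using $\prod_{a \in \F_q^\times} a = -1$). Meanwhile $H(x) = \prod_{b \in \F_q} h(x-b)$; modulo $x^n$ only the $b=0$ factor contributes a factor not divisible by $x$ at the constant term in a way I must track carefully — actually each factor $h(x-b)$ for $b \neq 0$ has nonzero constant term, so $H(x) \bmod x^n$ is the product of the $h(x-b)$ reduced mod $x^n$, and the $b = 0$ factor contributes the $x^{n-1}\pi^{q-1}$ computed above. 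The key point is to show that the coefficient of $x^{n-1}$ in $H(x)(\theta(x))^q$ is divisible by $\pi^{q-1}$ but not by $\pi^q$; since $\theta$ has unit constant term mod $\pi$ (it is a product of irreducibles of degree $\geq 2$, hence not divisible by $x$), $(\theta(x))^q$ has a unit constant term, so the coefficient of $x^{n-1}$ in the product is $\pi^{q-1}$ times a unit times (the appropriate contributions), and in particular is a unit multiple of $\pi^{q-1}$. Dividing by $\pi^q$ then leaves a coefficient with valuation $-1$, so $F(C) \notin M_n(V)$.

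I expect the main obstacle to be the bookkeeping in the previous paragraph: one must verify that the $x^{n-1}$-coefficient of $H(x)(\theta(x))^q$ modulo $x^n$ really is a \emph{unit} times $\pi^{q-1}$, not something with higher $\pi$-valuation. This requires checking that the contributions from the $b \neq 0$ factors of $H$, and from the tail terms of $(\theta(x))^q$, do not conspire to raise the valuation — i.e. that no unwanted cancellation of the leading $\pi^{q-1}$ term occurs. Since every such contribution is either a unit (constant terms of $h(x-b)$ for $b\neq 0$, constant term of $\theta^q$) or divisible by $\pi$ (which only helps), the worst term is exactly $\pi^{q-1}$ times a unit, and divisibility by $\pi^q$ fails. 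Once this valuation computation is pinned down, the lemma follows immediately from \cite[Thm.\ 6.3]{Fri}: a single bad companion matrix witnesses $F \notin \Int_K(M_n(V))$.
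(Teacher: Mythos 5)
Your proposal is correct and is essentially the paper's own argument: both evaluate $F$ at the companion matrix of $x^n$ (equivalently, reduce modulo $x^n$), observe that the factors $h(x-b)$ for $b\neq 0$ and $\theta(x)^q$ have unit value there mod $\pi$, and that $h$ itself contributes exactly $\pi^{q-1}$, so the denominator $\pi^q$ cannot be cleared. The bookkeeping you worry about is immediate once one notes that $h(x)\equiv -\pi^{q-1}x^{n-1} \pmod{x^n}$ and that multiplying by $x^{n-1}$ kills all but the constant terms of the remaining (unit) factors modulo $x^n$.
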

\begin{proof}
Let $C \in M_n(V)$ be the companion matrix for $x^n$. Then, $(\prod_{a \in \F_q^\times}h(C-aI)) (\theta(C))^q$ is a unit mod $\pi$, hence is also a unit mod $\pi^q$. So, the only way that $F(C)$ will be in $M_n(V)$ is if $h(C)$ is 0 mod $\pi^q$. However,
\begin{equation*}
h(C) = C^{n-1}\prod_{a \in \F_q^\times}(C^n + \pi aI) = C^{n-1}\prod_{a \in \F_q^\times}(\pi a)I
\end{equation*}
is only divisible by $q-1$ powers of $\pi$. Thus, $h(C) \notin \pi^q M_n(V)$, $F(C) \notin M_n(V)$, and $F \notin \Int_K(M_n(V))$.
Note that the same steps work if we replace $x^n$ by $(x-a)^n$, where $a \in \F_q^\times$, and replace $h(x)$ with $h(x-a)$.
\end{proof}

\begin{Rem}\label{Pullback remark}
The failure of $F$ to lie in $\Int_K(M_n(V))$ can also be expressed in terms of pullback rings. By \cite[Rem.\ 2.1 \& (3)]{PerDivDiff}), we have
\begin{equation*}
\Int_K(M_n(V))=\bigcap_{f \in \mcP_n}(V[x]+f(x)K[x]),
\end{equation*}
where $\mcP_n$ is the set of monic polynomials in $V[x]$ of degree exactly equal to $n$. The previous Claim then demonstrates that $F \notin V[x] + (x-a)^n K[x]$ for any $a \in \F_q$.
\end{Rem}

Showing that $F \in \Int_K(\Lambda_n(V))$ is more difficult. The general idea is to take $\alpha \in \Lambda_n(V)$ and focus on its minimal polynomial $m(x)$ over $V$. We then consider two possibilities, according to how the polynomial $m(x)$ factors over the residue field. Either $m(x) \equiv (x-a)^n$ mod $\pi$, for some $a \in \F_q$; or $m(x) \not\equiv (x-a)^n$ mod $\pi$, for all $a \in \F_q$. The first case is the more difficult one, and occupies the next several results. The second case is dealt with in Lemma \ref{Big thm second lemma}, right before we complete the proof of Theorem \ref{Big thm}.

So, for now we concentrate on those cases where $m(x) \equiv (x-a)^n$ mod $\pi$ for some $a \in \F_q$. In fact, by translation, it will be enough to consider the case where $m(x) \equiv x^n$ mod $\pi$. Our starting point is a lemma involving symmetric polynomials. Given a set $S = \{x_1, x_2, \ldots, x_n\}$, for each $1 \leq k \leq n$, we let $\sigma_k(S)$ denote the $k^{\text{th}}$ elementary symmetric polynomial in $x_1, x_2, \ldots, x_n$. 

\begin{Lem}\label{Symm poly lem}
Let $\overline{V}$ be the integral closure of $V$ in $\overline{K}$. Let $n \geq 2$, let $S = \{\alpha_1, \alpha_2, \ldots, \alpha_n\} \subset\overline{V}$, and let $S^{n-1} = \{\alpha_1^{n-1}, \alpha_2^{n-1}, \ldots, \alpha_n^{n-1}\}$. Assume the following:
\begin{itemize}
\item $\sigma_k(S) \in \pi V$, for all $1 \leq k \leq n-1$.
\item $\sigma_n(S) \in \pi^2 V$.
\end{itemize}
Then, $\sigma_k(S^{n-1}) \in \pi^k V$ for each $1 \leq k \leq n$.
\end{Lem}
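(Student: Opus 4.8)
The plan is to track $\pi$-adic valuations of symmetric functions carefully. Write $v$ for the valuation on $\overline{K}$ extending the one on $V$ normalized so that $v(\pi) = 1$. The hypotheses say $v(\sigma_k(S)) \geq 1$ for $1 \leq k \leq n-1$ and $v(\sigma_n(S)) \geq 2$; equivalently, if $m(x) = \prod_{i=1}^n (x - \alpha_i) = x^n + c_{n-1}x^{n-1} + \cdots + c_0$, then $v(c_k) \geq 1$ for $1 \leq k \leq n-1$ (up to sign these are the $\sigma_k$) and $v(c_0) \geq 2$. The goal is $v(\sigma_k(S^{n-1})) \geq k$ for $1 \leq k \leq n$. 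The cleanest route is to recall Newton's identities, or rather the general formula expressing power sums and symmetric functions of $S^{n-1}$ in terms of those of $S$, and then do a valuation estimate.

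**Main steps.**

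First I would observe that each $\alpha_i$ satisfies $v(\alpha_i) > 0$: indeed the product $\sigma_n(S) = \pm\prod \alpha_i$ has valuation $\geq 2 > 0$, and since $m(x) \equiv x^n \pmod \pi$ is a power of $x$, every root is $> 0$ in valuation; more precisely $\sum_i v(\alpha_i) = v(\sigma_n(S)) \geq 2$. Next, the key quantitative input: I claim $v(\sigma_k(S^{n-1})) = v(e_k(\alpha_1^{n-1},\dots,\alpha_n^{n-1}))$ can be bounded by expanding $e_k$ of the $(n-1)$st powers as a sum of monomials $\alpha_{i_1}^{n-1}\cdots\alpha_{i_k}^{n-1}$ and regrouping. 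The honest way is: $\sigma_k(S^{n-1})$ is a symmetric polynomial in the $\alpha_i$, hence a polynomial with $\Z$-coefficients in $\sigma_1(S), \dots, \sigma_n(S)$; I then need to check that in this expression every monomial $\sigma_1^{a_1}\cdots\sigma_n^{a_n}$ that appears has $\sum_j a_j + a_n \geq k$ (weighting $\sigma_n$ twice), since then its valuation is $\geq k$ by the hypotheses. To pin down which monomials appear, assign weight $1$ to $\alpha_i$ (so $\sigma_j$ has weight $j$) — then $\sigma_k(S^{n-1})$ is isobaric of weight $k(n-1)$ in the $\alpha$'s, so in any monomial $\sum_j j\,a_j = k(n-1)$. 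I must show this forces $\sum_j a_j + a_n \geq k$. Equivalently, writing $a = \sum_j a_j$ (number of $\sigma$-factors) and using $\sum_j j a_j = k(n-1)$, each factor contributes at most $n$ to the weight, so $k(n-1) = \sum_j j a_j \leq n\cdot a - a_n$ wait — more carefully, $\sum_j j a_j \leq n a$ with the loss $n a - \sum j a_j = \sum_j (n-j)a_j \geq a_{n-1}\cdot 1 + \cdots$; hmm, this needs the refined bound. Let me instead argue: $\sum_j j a_j = k(n-1)$ and $j \leq n$ gives $a \geq k(n-1)/n$, not quite $k$. So the naive weight count is insufficient, and I will need the extra valuation from $\sigma_n$ and from the fact that low-index $\sigma_j$ also appear.

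**The main obstacle.**

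The hard part will be exactly this combinatorial/valuation bookkeeping: a crude "weight $\geq k$" argument does not immediately work because $\sigma_n$ has weight $n$ but contributes only valuation $2$, and $\sigma_1,\dots,\sigma_{n-1}$ each contribute valuation $\geq 1$ while having weight up to $n-1$. The right bookkeeping is to define, for a monomial $\sigma_1^{a_1}\cdots\sigma_n^{a_n}$, the quantity $\ell = a_1 + \cdots + a_{n-1} + 2a_n$ (its guaranteed valuation) and to prove $\ell \geq k$ whenever that monomial occurs in $\sigma_k(S^{n-1})$. I expect the sharpest way to see this is combinatorial directly on the roots: a monomial $\alpha_{i_1}^{n-1}\cdots\alpha_{i_k}^{n-1}$ in the expansion of $e_k(S^{n-1})$ involves $k$ of the $n$ roots, hence omits $n-k$ of them; grouping the product $\prod_{\text{all }i}\alpha_i = \pm\sigma_n(S)$ out and estimating, $\prod_{j=1}^k \alpha_{i_j}^{n-1} = \big(\prod_{\text{all}} \alpha_i\big)^{?}\cdots$ — one writes $n-1 = (n-k) + (k-1)$ and pairs up. A clean identity to exploit: for any $k$-subset $T$, $\prod_{i\in T}\alpha_i^{n-1} = \sigma_n(S)^{k-1}\cdot \big(\prod_{i\in T}\alpha_i\big)\big/\big(\prod_{i\notin T}\alpha_i\big)^{?}$ — I will need to massage this so that the total is a product of $\sigma_j(S)$'s, then count. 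Alternatively, and probably most efficiently, I would induct on $n$ or reduce to the universal case where $m(x) = x^n - c_0$ via a specialization/continuity argument. In any case, once the combinatorial inequality "$\ell \geq k$ for every occurring monomial" is established, the conclusion $v(\sigma_k(S^{n-1})) \geq k$ is immediate, and that inequality is the crux of the lemma.
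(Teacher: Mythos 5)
Your setup is exactly the paper's: express $\sigma_k(S^{n-1})$ via the Fundamental Theorem of Symmetric Polynomials as an integer-coefficient polynomial in $\sigma_1(S),\ldots,\sigma_n(S)$, note that every occurring monomial $\sigma_1^{a_1}\cdots\sigma_n^{a_n}$ is isobaric of weight $\sum_j j\,a_j=k(n-1)$ in the $\alpha_i$, observe that the hypotheses give such a monomial valuation at least $\ell=a_1+\cdots+a_{n-1}+2a_n$, and reduce everything to the inequality $\ell\geq k$. But you stop precisely there: you declare that inequality to be ``the crux of the lemma'' and offer only speculative routes (a root-by-root regrouping you admit you cannot complete, induction on $n$, a specialization argument), none of which you carry out. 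As written, the proof is incomplete at its only nontrivial step.

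Moreover, your reason for abandoning the direct weight count is a miscalculation: you bounded $\sum_j j\,a_j\leq n\sum_j a_j$ and concluded the count only yields $\sum_j a_j\geq k(n-1)/n$. The right move is to divide the weight identity by $n-1$ rather than by $n$. Since $j/(n-1)\leq 1$ for $1\leq j\leq n-1$ and $n/(n-1)\leq 2$ for $n\geq 2$, one gets
\begin{equation*}
k=\sum_{j=1}^{n}\frac{j\,a_j}{n-1}\leq a_1+a_2+\cdots+a_{n-1}+2a_n=\ell,
\end{equation*}
which is exactly the inequality you need and is how the paper finishes. So the gap is real but small: the ``extra valuation from $\sigma_n$'' that you correctly flagged as necessary is absorbed precisely by the factor $n/(n-1)\leq 2$, and no combinatorics on subsets of roots is required. (Your preliminary observations --- that integrality of $\sigma_k(S^{n-1})$ follows from the $\Z$-coefficients in the symmetric-function expression, and that the integer coefficients do not lower the valuation --- are fine.)
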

\begin{proof}
The stated conditions ensure that $\prod_{k=1}^n(x-\alpha_k) \in V[x]$. In particular, $S \subset \Lambda_n(V)$ and the set of conjugates of each $\alpha_k $ is contained in $S$. From this, it follows that the polynomial $\prod_{k=1}^n(x-\alpha_k^{n-1})$ is also in $V[x]$, and thus that $\sigma_k(S^{n-1}) \in V$ for each $k$. It remains to show that $\pi^k$ divides $\sigma_k(S^{n-1})$.

Fix $k$ between 1 and $n$ and consider $\sigma_k(S^{n-1})$. By ``total degree'' we mean degree as a polynomial in $\alpha_1, \alpha_2, \ldots, \alpha_n$. Thus, each element of $S^{n-1}$ has total degree $n-1$; each monomial of $\sigma_k(S^{n-1})$ has total degree $k(n-1)$; and $\sigma_k(S^{n-1})$, being homogeneous in $\alpha_1, \alpha_2, \ldots, \alpha_n$, also has total degree $k(n-1)$. By the Fundamental Theorem of Symmetric Polynomials, $\sigma_k(S^{n-1})$ equals a polynomial $f$ in $\sigma_1(S), \sigma_2(S), \ldots, \sigma_n(S)$. Moreover, each monomial $a\sigma_1(S)^{e_1}\sigma_2(S)^{e_2} \cdots \sigma_n(S)^{e_n}$ in $f$ has total degree (as a polynomial in $\alpha_1, \alpha_2, \ldots, \alpha_n$) equal to $k(n-1)$. It suffices to prove that each such monomial in $f$ is divisible by $\pi^k$.

Let $v$ denote the natural valuation for $V$ and let $\beta = a\sigma_1(S)^{e_1}\sigma_2(S)^{e_2} \cdots \sigma_n(S)^{e_n}$ be a monomial of $f$ in $\sigma_1(S), \sigma_2(S), \ldots, \sigma_n(S)$. Since the total degree of $\beta$ is $k(n-1)$, we obtain 
\begin{equation}\label{eq1}
e_1 + 2e_2 + \cdots + n e_n = k(n-1).
\end{equation}
Also, by assumption,
\begin{align*}
v(\beta) &\geq v(\sigma_1(S)^{e_1}) + v(\sigma_2(S)^{e_2}) + \cdots + v(\sigma_n(S)^{e_n})\\
&\geq e_1 + e_2 + \cdots + e_{n-1} + 2e_n.
\end{align*}
We want to show that $v(\beta) \geq k$. From \eqref{eq1}, we have
\begin{align*}
k &= \dfrac{e_1}{n-1} + \dfrac{2 e_2}{n-1} + \cdots + \dfrac{(n-1) e_{n-1}}{n-1} + \dfrac{n e_n}{n-1}\\
&\leq e_1 + e_2 + \cdots + e_{n-1} + 2 e_n\\
&\leq v(\beta),
\end{align*}
as desired.
\end{proof}

Lemma \ref{Symm poly lem} is used to prove the first part of the next proposition.

\begin{Prop}\label{Alg int prop}
Let $\alpha \in \Lambda_n(V)$ with minimal polynomial over $V$ equal to $m(x) = x^n + \pi a_{n-1}x^{n-1} + \cdots + \pi a_1 x + \pi a_0$, where each $a_k \in V$.
\begin{enumerate}[(1)]
\item If $a_0 \in \pi V$, then $\alpha^{n-1}/\pi \in O_\alpha$.
\item If $a_0 \notin \pi V$, then $(\alpha^{n-1}(\alpha^n + \pi a))/\pi^2 \in O_\alpha$, where $a \in \F_q$ is the residue of $a_0$ mod $\pi$.
\end{enumerate}
\end{Prop}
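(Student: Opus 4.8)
The plan is to handle the two parts separately: part (1) by exhibiting an explicit monic polynomial over $V$ that annihilates $\alpha^{n-1}/\pi$, assembled from Lemma~\ref{Symm poly lem}; part (2) by a short direct manipulation of the relation $m(\alpha)=0$.

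For part (1), I would let $\alpha=\alpha_1,\alpha_2,\dots,\alpha_n\in\overline{K}$ be the roots of $m$ (listed with multiplicity) and set $S=\{\alpha_1,\dots,\alpha_n\}\subset\overline{V}$. Comparing $m(x)=\prod_{k=1}^n(x-\alpha_k)$ with the given expression for $m$ shows $\sigma_k(S)=(-1)^k\pi a_{n-k}\in\pi V$ for $1\le k\le n$, and in case (1) the hypothesis $a_0\in\pi V$ forces $\sigma_n(S)=(-1)^n\pi a_0\in\pi^2 V$. Thus $S$ meets the hypotheses of Lemma~\ref{Symm poly lem}, and we obtain $\sigma_k(S^{n-1})\in\pi^k V$ for $1\le k\le n$. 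I would then consider
\begin{equation*}
g(x)=\prod_{k=1}^n\Bigl(x-\tfrac{\alpha_k^{n-1}}{\pi}\Bigr)=\sum_{j=0}^n(-1)^j\,\frac{\sigma_j(S^{n-1})}{\pi^j}\,x^{n-j}.
\end{equation*}
Each coefficient $\sigma_j(S^{n-1})$ is symmetric in $\alpha_1,\dots,\alpha_n$, hence a polynomial in the coefficients of $m$, so $g\in K[x]$; by the divisibilities just obtained $g\in V[x]$; $g$ is monic; and $g(\alpha^{n-1}/\pi)=0$. Since $\alpha^{n-1}/\pi\in K(\alpha)$ and $O_\alpha$ is the integral closure of $V$ in $K(\alpha)$, this gives $\alpha^{n-1}/\pi\in O_\alpha$.

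For part (2), the relation $m(\alpha)=0$ gives $\alpha^n=-\pi(a_{n-1}\alpha^{n-1}+\cdots+a_1\alpha+a_0)$, so in particular $\alpha^n/\pi\in V[\alpha]\subseteq O_\alpha$. Choosing a representative in $V$ for $a$ with $a_0-a\in\pi V$, write $a_0-a=\pi c$ with $c\in V$; then
\begin{equation*}
\alpha^n+\pi a=(\alpha^n+\pi a_0)-\pi^2 c=-\pi\,\alpha\bigl(a_{n-1}\alpha^{n-2}+\cdots+a_1\bigr)-\pi^2 c,
\end{equation*}
and multiplying by $\alpha^{n-1}/\pi^2$ yields
\begin{equation*}
\frac{\alpha^{n-1}(\alpha^n+\pi a)}{\pi^2}=-\frac{\alpha^n}{\pi}\bigl(a_{n-1}\alpha^{n-2}+\cdots+a_1\bigr)-c\,\alpha^{n-1},
\end{equation*}
which lies in $O_\alpha$ because $\alpha^n/\pi\in O_\alpha$ and the remaining factors lie in $V[\alpha]$.

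I expect no deep obstacle in the Proposition itself: the real content of part (1) is already packaged in Lemma~\ref{Symm poly lem}, so what remains is the sign-sensitive bookkeeping identifying the coefficients of $m$ with $\pm\pi$ times the $\sigma_k(S)$, plus the verification that $g$ has coefficients in $K$ before the Lemma is applied; part (2) is essentially a one-line computation once one observes from $m(\alpha)=0$ that $\alpha^n/\pi\in O_\alpha$. The only point that requires a moment's thought is the inseparable case, where $S$ is genuinely a multiset, but the symmetric-function formalism and Lemma~\ref{Symm poly lem} are insensitive to this.
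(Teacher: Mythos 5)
Your proposal is correct and follows essentially the same route as the paper: part (1) reduces to Lemma~\ref{Symm poly lem} via the polynomial $g(x)=\prod_k(x-\alpha_k^{n-1}/\pi)$, and part (2) is the same direct manipulation of $m(\alpha)=0$ (the paper phrases it as $\alpha^n\in\pi O_\alpha$ rather than $\alpha^n/\pi\in V[\alpha]$, but the computation is identical). No gaps.
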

\begin{proof}
(1) Assume $a_0 \in \pi V$. Let $\alpha_1, \alpha_2, \ldots, \alpha_n$ be the roots of $m$. Let $S = \{\alpha_1, \alpha_2, \ldots, \alpha_n\}$ and $S^{n-1} = \{\alpha_1^{n-1}, \alpha_2^{n-1}, \ldots, \alpha_n^{n-1}\}$. Then, the conditions of Lemma \ref{Symm poly lem} are satisfied, so $\sigma_k(S^{n-1}) \in \pi^k V$ for each $1 \leq k \leq n$. Let $g(x) = \prod_{k=1}^n (x-\alpha_k^{n-1}/\pi)$. Then, the coefficient of $x^{n-k}$ in $g(x)$ is $(-1)^{n-k} \sigma_k(S^{n-1})/\pi^k \in V$. Thus, $g \in V[x]$ and $g(\alpha^{n-1}/\pi) = 0$, so $\alpha^{n-1}/\pi \in O_\alpha$.\\

(2) Assume $a_0 \notin \pi V$. Let $a' \in V$ be such that $a - a_0 = \pi a'$. Since $m(\alpha) = 0$, we have
\begin{equation*}
\alpha^n = -\pi a_{n-1}\alpha^{n-1} - \cdots - \pi a_1\alpha - \pi a_0.
\end{equation*}
In particular, this means that $\alpha^n \in \pi O_\alpha$. Next,
\begin{align*}
\alpha^{n-1}(\alpha^n + \pi a) &= \alpha^{n-1}(-\pi a_{n-1}\alpha^{n-1} - \cdots - \pi a_1\alpha - \pi a_0 + \pi a)\\
&= \alpha^{n-1}(-\pi a_{n-1}\alpha^{n-1} - \cdots - \pi a_1\alpha + \pi^2 a').
\end{align*}
For each $1 \leq k \leq n-1$, $\alpha^{n-1}(\pi a_k \alpha^k)$ is divisible by both $\pi$ and $\alpha^n$, so $\alpha^{n-1}(\pi a_k \alpha^k) \in \pi^2 O_\alpha$. Also, $\alpha^{n-1}\pi^2 a' \in \pi^2 O_\alpha$. It now follows that $(\alpha^{n-1}(\alpha^n + \pi a))/\pi^2 \in O_\alpha$.
\end{proof}

Now, we relate Proposition \ref{Alg int prop} to the polynomial from Construction \ref{The construction}.

\begin{Prop}\label{Alg int prop 2}
Let $\alpha \in \Lambda_n(V)$ have minimal polynomial $m(x)$ such that $m(x) \equiv x^n$ mod $\pi$. Let $f(x) = h(x)/\pi^q$, where $h$ is as in Construction \ref{The construction}. Then, $f(\alpha) \in O_\alpha$.
\end{Prop}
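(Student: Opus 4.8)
The plan is to reduce everything to Proposition \ref{Alg int prop} and then carefully track powers of $\pi$. Since $m(x) \equiv x^n$ mod $\pi$, the minimal polynomial $m$ is monic of degree exactly $n$ and has the form $m(x) = x^n + \pi a_{n-1}x^{n-1} + \cdots + \pi a_1 x + \pi a_0$ with each $a_k \in V$; evaluating at $\alpha$ gives $\alpha^n = -\pi(a_{n-1}\alpha^{n-1} + \cdots + a_0) \in \pi O_\alpha$. The first, routine, observation is therefore that $(\alpha^n + \pi b)/\pi = \alpha^n/\pi + b \in O_\alpha$ for every $b \in V$, and in particular for every $b \in \F_q^\times$. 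Each such quotient will account for one power of $\pi$ in the denominator $\pi^q$ of $f = h/\pi^q$, and there are $q - 1$ factors of the form $(x^n + \pi a)$ appearing in $h(x)$.

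Next I would split into the two cases of Proposition \ref{Alg int prop}, according to whether $a_0 \in \pi V$ or not. If $a_0 \in \pi V$, Proposition \ref{Alg int prop}(1) gives $\alpha^{n-1}/\pi \in O_\alpha$, which supplies the one remaining power of $\pi$; writing
\begin{equation*}
f(\alpha) = \frac{\alpha^{n-1}}{\pi}\prod_{a \in \F_q^\times}\frac{\alpha^n + \pi a}{\pi}
\end{equation*}
then exhibits $f(\alpha)$ as a product of elements of $O_\alpha$, hence $f(\alpha) \in O_\alpha$. If instead $a_0 \notin \pi V$, let $a \in \F_q^\times$ be its residue; the crucial point is that $x^n + \pi a$ is then \emph{one of the factors of} $h(x)$. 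Proposition \ref{Alg int prop}(2) gives $\alpha^{n-1}(\alpha^n + \pi a)/\pi^2 \in O_\alpha$, which absorbs two powers of $\pi$ and uses up the factor $x^{n-1}$ together with that distinguished factor $x^n + \pi a$. The remaining $q - 2$ factors $x^n + \pi b$, $b \in \F_q^\times \setminus \{a\}$, each contribute a power of $\pi$ via the first observation, so
\begin{equation*}
f(\alpha) = \frac{\alpha^{n-1}(\alpha^n + \pi a)}{\pi^2}\prod_{b \in \F_q^\times \setminus\{a\}}\frac{\alpha^n + \pi b}{\pi}
\end{equation*}
again displays $f(\alpha)$ as a product of elements of $O_\alpha$. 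In both cases the powers of $\pi$ balance exactly ($1 + (q-1) = q$ and $2 + (q-2) = q$), which finishes the argument.

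I do not expect a serious obstacle: all the analytic content sits in Proposition \ref{Alg int prop}, and the only thing to get right is the bookkeeping of the denominator. The one genuinely load-bearing point is that when $a_0 \notin \pi V$ its residue $a$ is a \emph{nonzero} element of $\F_q$, so the ``extra'' factor $x^n + \pi a$ furnished by Proposition \ref{Alg int prop}(2) really does occur in $h$; this is exactly why $h$ was built with the product indexed by $\F_q^\times$ and with the exponent $q$ (rather than $q-1$) on $\pi$, so that the count works out uniformly in both cases.
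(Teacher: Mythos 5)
Your proposal is correct and follows essentially the same route as the paper's proof: the same preliminary observation that $(\alpha^n+\pi b)/\pi \in O_\alpha$, the same case split on whether $a_0 \in \pi V$, and the same two groupings of the factors of $f(\alpha)$ via Proposition \ref{Alg int prop}. The only difference is that you spell out the bookkeeping of the powers of $\pi$ slightly more explicitly, which the paper leaves implicit.
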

\begin{proof}
Since $m(x) \equiv x^n$ mod $\pi$, we have $m(x) = x^n + \pi a_{n-1}x^{n-1} + \cdots + \pi a_1 x + \pi a_0$ for some $a_0, \ldots, a_{n-1} \in V$. Note that for $a \in \F_q^\times$, we have $(\alpha^n + \pi a)/\pi \in O_\alpha$.

If $a_0 \in \pi V$, then $\alpha^{n-1}/\pi \in O_\alpha$ by Proposition \ref{Alg int prop} part (1). In this case,
\begin{equation*}
f(\alpha) = \dfrac{\alpha^{n-1}}{\pi}\prod_{a \in \F_q^\times} \dfrac{\alpha^n + \pi a}{\pi}
\end{equation*}
is an element of $O_\alpha$.

If $a_0 \notin \pi V$, then by Proposition \ref{Alg int prop} part (2), there exists $a \in \F_q^\times$ such that $(\alpha^{n-1}(\alpha^n + \pi a))/\pi^2 \in O_\alpha$. This time, we group the factors of $f(\alpha)$ as
\begin{equation*}
f(\alpha) = \dfrac{\alpha^{n-1}(\alpha^n + \pi a)}{\pi^2}  \prod_{\substack{b \in \F_q^\times,\\b\ne a}} \dfrac{\alpha^n + \pi b}{\pi}
\end{equation*}
and as before we see that $f(\alpha) \in O_\alpha$.
\end{proof}

Proposition \ref{Alg int prop 2} is what we ultimately need to prove Theorem \ref{Big thm}. As mentioned after Remark \ref{Pullback remark}, there is a second case to consider, in which an element $\alpha \in \Lambda_n(V)$ has a minimal polynomial $m(x)$ such that $m(x) \not\equiv (x-a)^n$ mod $\pi$, for all $a \in \F_q$. Most of the work required in this case is done in the next lemma.

\begin{Lem}\label{Big thm second lemma}
Let $\alpha \in \Lambda_n(V)$ with minimal polynomial $m(x)$ such that $m(x) \not\equiv (x-a)^n$ mod $\pi$, for all $a \in \F_q$. Then  the numerator of $F$ admits a factorization $H(x)(\theta(x))^q = \prod_{b \in \F_q} f_b(x)$ such that $m(x)$ divides $f_b(x)$ mod $\pi$ for each $b$. Consequently, $F(\alpha)\in O_{\alpha}$.
\end{Lem}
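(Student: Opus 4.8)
The plan is to prove the factorization claim and then deduce $F(\alpha)\in O_\alpha$ from it in a couple of lines. For the deduction: given $H(x)\theta(x)^q=\prod_{b\in\F_q}f_b(x)$ with $\overline{m}:=m\bmod\pi$ dividing $f_b\bmod\pi$ in $\F_q[x]$ for every $b$, write $f_b=mg_b+\pi r_b$ with $g_b,r_b\in V[x]$; since $m(\alpha)=0$, evaluating gives $f_b(\alpha)=\pi r_b(\alpha)\in\pi O_\alpha$. As there are exactly $q=|\F_q|$ factors, $H(\alpha)\theta(\alpha)^q=\prod_b f_b(\alpha)\in\pi^q O_\alpha$, so $F(\alpha)=H(\alpha)\theta(\alpha)^q/\pi^q\in O_\alpha$. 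Thus all the work lies in building the factorization.

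First I would decompose $H(x)\theta(x)^q$ into atomic factors over $V[x]$. From $h(x-b)=(x-b)^{n-1}\prod_{a\in\F_q^\times}((x-b)^n+\pi a)$ one sees that $H(x)\theta(x)^q$ is precisely the product of the following blocks: the linear polynomials $x-b$ for $b\in\F_q$, each with multiplicity $n-1$; the quasi-linear polynomials $(x-b)^n+\pi a$ for $b\in\F_q$, $a\in\F_q^\times$, each with multiplicity $1$; and the polynomials $f\in\msP$, each with multiplicity $q\lfloor n/\deg f\rfloor$ (coming from $\theta(x)^q$). A factorization $H\theta^q=\prod_{b'\in\F_q}f_{b'}$ then amounts to a partition of this multiset of blocks into $q$ groups indexed by $b'\in\F_q$, and the requirement becomes that each group's product is divisible by $\overline{m}$ modulo $\pi$. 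The reductions that matter are $x-b\mapsto x-b$, $(x-b)^n+\pi a\mapsto (x-b)^n$, and $f\mapsto f$ (an irreducible of degree $\geq2$).

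Next I would factor $\overline{m}=\prod_c(x-c)^{e_c}\prod_p p^{e_p}$, with $c$ running over the roots of $\overline{m}$ in $\F_q$ and $p$ over its irreducible factors of degree $\geq2$. Because $\deg m\leq n$ we get $e_p\deg p\leq n$, hence $p\in\msP$ and $e_p\leq\lfloor n/\deg p\rfloor$; and because $\overline{m}\neq(x-a)^n$ for every $a\in\F_q$, no $x-c$ occurs with multiplicity $n$ (that would force $\overline{m}=(x-c)^n$), so $e_c\leq n-1$. Using these bounds, I distribute the blocks as follows: place $e_p$ copies of each $p$ into every one of the $q$ groups (legitimate since this uses $qe_p\leq q\lfloor n/\deg p\rfloor$ copies); for each root $c$, send all $n-1$ copies of $x-c$ to one fixed distinguished group and spread the $q-1$ blocks $(x-c)^n+\pi a$ over the other $q-1$ groups, one per group; finally put every still-unassigned block (surplus copies of the $p$'s, and every block attached to a $c$ or $f$ not dividing $\overline{m}$) into the distinguished group. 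Then the distinguished group contains $(x-c)^{n-1}$, hence $(x-c)^{e_c}$, for each root $c$ (this is where $e_c\leq n-1$ is needed), together with $p^{e_p}$ for each $p$; and every other group contains $(x-c)^n$, hence $(x-c)^{e_c}$, for each root $c$, together with $p^{e_p}$ for each $p$. Since the $x-c$ and the $p$ are pairwise coprime, the reduction of each group's product is divisible by $\overline{m}$, as required.

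The main obstacle will be exactly this last piece: pinning down the two counting inequalities and then fitting them into an actual partition in which every block is used once. The delicate point is that for each root $c$ there are $q$ groups but only $q-1$ quasi-linear blocks $(x-c)^n+\pi a$, so one group must be fed the linear copies of $x-c$ instead of a quasi-linear block — and that works only because $e_c\leq n-1$, i.e., precisely because the hypothesis rules out $\overline{m}=(x-a)^n$ (in which case one would need $qn$ copies of $x-a$ mod $\pi$ but only $nq-1$ are available — the same obstruction responsible for $F\notin\Int_K(M_n(V))$). The remaining ingredients — the mod-$\pi$ reductions of $h$ and $\theta$, and the passage from divisibility by $\overline{m}$ to membership in $\pi O_\alpha$ — are routine.
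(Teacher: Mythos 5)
Your proposal is correct and follows essentially the same route as the paper: decompose the numerator into the linear blocks $(x-b)^{n-1}$, the blocks $(x-b)^n+\pi a$, and the copies of $\theta$, group them into $q$ factors each divisible by $\overline{m}$ mod $\pi$ using exactly the two multiplicity bounds $e_c\leq n-1$ and $e_p\leq\lfloor n/\deg p\rfloor$, and conclude $F(\alpha)\in\pi^q O_\alpha/\pi^q\subseteq O_\alpha$. The only difference is cosmetic --- the paper uses the canonical grouping $f_0=(\prod_b(x-b)^{n-1})\theta$ and $f_a=(\prod_b((x-b)^n+\pi a))\theta$, whereas you re-partition the same blocks slightly --- and both satisfy the existence claim of the lemma.
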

\begin{proof}
We have
\begin{align*}
H(x) &= \prod_{b \in \F_q} h(x-b)\\
&= \prod_{b \in \F_q} \Big[(x-b)^{n-1} \prod_{a \in \F_q^\times} ((x-b)^n + \pi a)\Big] \\
&= \Big[ \prod_{b \in \F_q} (x-b)^{n-1}\Big]\Big[\prod_{a \in \F_q^\times} \prod_{b \in \F_q} ((x-b)^n + \pi a)\Big].
\end{align*}
Let $f_0(x) = (\prod_{b \in \F_q} (x-b)^{n-1})\theta(x)$ and for each $a \in \F_q^\times$, let $f_a(x) = (\prod_{b \in \F_q} ((x-b)^n + \pi a))\theta(x)$. Then, $H(x)(\theta(x))^q = \prod_{b \in \F_q} f_b(x)$.

Now, factor $m(x)$ mod $\pi$ as
\begin{equation*}
m(x) \equiv \iota_1(x)^{n_1} \iota_2(x)^{n_2} \cdots \iota_t(x)^{n_t}
\end{equation*}
where each $\iota_k$ is a distinct monic irreducible polynomial in $\F_q[x]$. Assuming that $m(x) \not\equiv (x-a)^n$ mod $\pi$ for any $a \in \F_q$, each exponent $n_k$ satisfies $1 \leq n_k \leq \lfloor n/\deg(\iota_k) \rfloor  < n$ if  $\deg(\iota_k)>1$ or $1 \leq n_k <n$ if $\deg(\iota_k)=1$. So, working mod $\pi$, the product of the $\iota_k^{n_k}$ with $\deg(\iota_k) > 1$ divides $\theta$; and the product of the $\iota_k^{n_k}$ with $\deg(\iota_k) = 1$ divides $f_b/\theta$ for each $b \in \F_q$. Hence, $m(x)$ divides $f_b(x)$ mod $\pi$. Finally, the last condition implies that $f_b(\alpha) \in \pi O_\alpha$ for each $b$. Thus, $H(\alpha)(\theta(\alpha))^q = \prod_{b \in \F_q} f_b(\alpha) \in \pi^q O_\alpha$, and so $F(\alpha) \in O_\alpha$.
\end{proof}

Finally, we complete the proof of Theorem \ref{Big thm}. For convenience, the theorem is restated below.

\setcounter{equation}{1}
\begin{Thm}
Let the notation be as in Construction \ref{The construction}. Then, $F$ is properly integral over $\Int_K(M_n(V))$.
\end{Thm}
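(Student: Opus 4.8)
The plan is to read the theorem off the results already in hand, so almost all of the genuine work is behind us. By \cite[Cor.\ 16]{PerWer}, the integral closure of $\Int_K(M_n(V))$ equals $\Int_K(\Lambda_n(V))$, and since this integral closure is contained in $K[x]$, it suffices to establish two things: first, $F\notin\Int_K(M_n(V))$; second, $F(\alpha)\in O_\alpha$ for every $\alpha\in\Lambda_n(V)$. The first assertion is precisely Lemma \ref{Big thm first lemma}, so the only remaining task is the second.

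For the second point I would fix $\alpha\in\Lambda_n(V)$ and take its minimal polynomial $m(x)$ over $K$ — which automatically lies in $V[x]$, since $V$ is a DVR (hence integrally closed) and $\alpha$ is integral over $V$ — and then split into the two cases flagged in the text. If $m(x)\not\equiv (x-a)^n\ \text{mod}\ \pi$ for every $a\in\F_q$, then Lemma \ref{Big thm second lemma} applies directly and yields $F(\alpha)\in O_\alpha$; I would observe that this case silently absorbs every $\alpha$ whose minimal polynomial has degree strictly less than $n$, because then the monic reduction of $m$ modulo $\pi$ cannot equal a degree-$n$ polynomial. Hence the only case that needs separate treatment is $m(x)\equiv (x-a)^n\ \text{mod}\ \pi$ for some $a\in\F_q$, in which necessarily $\deg m=n$.

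In that last case I would reduce to Proposition \ref{Alg int prop 2} by a translation. Let $\tilde a\in V$ be the chosen residue representative of $a$ and set $\beta=\alpha-\tilde a$. Then $K(\beta)=K(\alpha)$, so $\beta\in\Lambda_n(V)$ and $O_\beta=O_\alpha$, and the minimal polynomial of $\beta$ is $m(x+\tilde a)$, whose reduction modulo $\pi$ is $x^n$. Proposition \ref{Alg int prop 2} applied to $\beta$ then gives $h(\alpha-\tilde a)/\pi^q=h(\beta)/\pi^q\in O_\beta=O_\alpha$. Since $h(x-\tilde a)$ is one of the factors of $H(x)=\prod_{b\in\F_q}h(x-b)$, I can write
\begin{equation*}
F(\alpha)=\frac{h(\alpha-\tilde a)}{\pi^q}\cdot\Big(\prod_{\substack{b\in\F_q\\ b\neq a}}h(\alpha-\tilde b)\Big)\cdot\theta(\alpha)^q,
\end{equation*}
where the first factor lies in $O_\alpha$ by the above and every remaining factor lies in $V[\alpha]\subseteq O_\alpha$; as $O_\alpha$ is a ring, $F(\alpha)\in O_\alpha$. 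Combining this with Lemma \ref{Big thm first lemma} shows that $F$ is properly integral.

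The substantive obstacles are not in this final assembly at all but in the supporting results — the valuation estimate of Lemma \ref{Symm poly lem} that feeds Proposition \ref{Alg int prop}, and the factorization/divisibility argument of Lemma \ref{Big thm second lemma}. Within the assembly itself, the one point that requires care is the translation in the last case: one must check that subtracting the residue representative $\tilde a$ genuinely turns $m$ into a polynomial congruent to $x^n$ modulo $\pi$ (so that Proposition \ref{Alg int prop 2} applies verbatim) and that it is legitimate to isolate the single factor $h(x-\tilde a)$ of $H$ rather than trying to translate $H$ as a whole; everything else is routine bookkeeping with the fact that $O_\alpha$ is a ring containing $V[\alpha]$.
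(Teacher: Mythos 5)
Your proposal is correct and follows essentially the same route as the paper: Lemma \ref{Big thm first lemma} for non-membership, and the same two-case split on $m(x)$ mod $\pi$ handled by Proposition \ref{Alg int prop 2} (via translation) and Lemma \ref{Big thm second lemma} respectively. The translation step and the isolation of the single factor $h(x-\tilde a)$ from $H$, which you spell out carefully, are exactly what the paper's proof does implicitly.
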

\begin{proof}
The polynomial $F \notin \Int_K(M_n(V))$ by Lemma \ref{Big thm first lemma}. To show that $F \in \Int_K(\Lambda_n(V))$, let $\alpha \in \Lambda_n(V)$. We will prove that $F(\alpha) \in O_\alpha$.

Let $m(x)$ be the minimal polynomial of $\alpha$. If $m(x) \equiv (x-a)^n$ mod $\pi$ for some $a \in \F_q$, then by Proposition \ref{Alg int prop 2} we have $h(\alpha - a)/\pi^q \in O_\alpha$. Hence, $F(\alpha) \in O_\alpha$ in this case. If instead $m(x) \not\equiv (x-a)^n$ mod $\pi$ for all $a \in \F_q$, then by Lemma  \ref{Big thm second lemma} we still have  $F(\alpha) \in O_\alpha$.
\end{proof}
\setcounter{equation}{10}

\section{Globalization and Extension to Algebras}\label{Global section}
In this section, we discuss how to globalize Construction \ref{The construction}, and demonstrate that it is applicable to algebras other than matrix rings.

Thus far, we have focused on the local case and worked with the DVR $V$. However, since the formation of our integer-valued polynomial rings is well-behaved with respect to localization, our results can be applied to the global case where $V$ is replaced with a Dedekind domain. For the remainder of this section, $D$ will denote a Dedekind domain with finite residue fields. As with $V$, we let $K$ be the fraction field of $D$ and we fix an algebraic closure $\overline{K}$ of $K$. For $n \geq 2$, let $\Lambda_n(D)$ be the set of elements of $\overline{K}$ whose degree over $D$ is at most $n$. Then, by \cite[Cor.\ 16]{PerWer}, the integral closure of $\Int_K(M_n(D))$ is $\Int_K(\Lambda_n(D))$. By taking $V = D_P$ for a nonzero prime $P$ of $D$, we can use our local construction to produce polynomials that are properly integral over $\Int_K(M_n(D))$. Most of the work is done in the following lemma, which works over any integral domain $D$.

\begin{Lem}\label{Global lemma}
Let $R$ and $S$ be $D$-modules such that $D[x] \subseteq R \subseteq S \subseteq K[x]$. Assume there exists a nonzero prime $P$ of $D$ such that $R_{P} \subsetneqq S_{P}$, and let $f \in S_{P} \setminus R_{P}$. Then, there exists $c \in D \setminus P$ such that $cf \in S \setminus R$.
\end{Lem}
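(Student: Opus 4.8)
The plan is to work directly with the localization. Since $f \in S_P$, I can write $f$ as a fraction: there exists some $s \in D \setminus P$ such that $sf \in S$. Indeed, $S_P$ consists of elements of the form $g/d$ with $g \in S$ and $d \in D \setminus P$ (because localization at $P$ is the same as inverting all elements of $D \setminus P$, and $S$ is a $D$-module), so multiplying $f$ by a suitable $s \in D \setminus P$ clears the denominator and lands in $S$. The candidate for $c$ will be this $s$, but I must verify the second requirement, namely $cf \notin R$.

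For that, I argue by contradiction. Suppose $cf \in R$ for the chosen $c = s \in D \setminus P$. Then, passing to the localization $R_P$, the element $cf$ still lies in $R_P$; but $c$ is a unit in $D_P$, hence a unit acting on the $D_P$-module $R_P$, so $f = c^{-1}(cf) \in R_P$. This contradicts the hypothesis $f \in S_P \setminus R_P$. Therefore $cf \notin R$, and combined with $cf \in S$ from the first paragraph, we conclude $cf \in S \setminus R$, as desired.

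The only subtle point — and the step I'd be most careful about — is the very first claim that $f \in S_P$ implies $sf \in S$ for some $s \in D \setminus P$. This uses that $S$ is a $D$-module and that $S_P = (D \setminus P)^{-1} S$ as the localization of that module; an arbitrary element of $S_P$ is $t/u$ with $t \in S$, $u \in D \setminus P$, possibly identified with other such fractions, but in any case $u \cdot (t/u) = t \in S$. One should note this identification of $S_P$ with $(D\setminus P)^{-1}S$ is exactly the definition of the localized module, so there is nothing deep here; it just needs to be stated cleanly. The inclusions $D[x] \subseteq R \subseteq S \subseteq K[x]$ are not really needed beyond guaranteeing these are genuine $D$-modules sitting inside $K[x]$ (so the fractions make sense and $c^{-1}$ can be applied unambiguously inside $K[x]$), and I would mention this only in passing rather than belabor it.
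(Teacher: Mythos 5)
Your proof is correct and is essentially the same as the paper's: both clear the denominator to get $cf \in S$ and then observe that $cf \in R$ would force $f \in R_P$ (you phrase this as a contradiction, the paper as a contrapositive). The torsion-freeness point you flag is handled implicitly in the paper by viewing everything inside $K[x]$, exactly as you suggest.
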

\begin{proof}
Write $f(x) = g(x)/d$, where $g \in S$, $d \in D\setminus P$, and $d$ does not divide $g$.  Since $f(x)$ is not in $R_P$, $g\notin R$. Hence, $df=g\in S\setminus R$, as wanted.
\end{proof}

We also require a result regarding the localization of $\Int_K(A)$ at primes of $D$.

\begin{Prop}\label{Localization prop}(\cite[Prop.\ 3.1, 3.2]{Wer3})
Let $A$ be a torsion-free $D$-algebra that is finitely generated as $D$-module and such that $A \cap K = D$. Then, $\Int_K(A)_Q = \Int_K(A_Q)$ for each nonzero prime $Q$ of $D$, and $\Int_K(A) = \bigcap_Q \Int_K(A)_Q$, where the intersection is over all nonzero primes $Q$ of $D$.
\end{Prop}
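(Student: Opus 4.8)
The plan is to treat the two assertions separately. The global identity $\Int_K(A) = \bigcap_Q \Int_K(A)_Q$ is a formal consequence of the standard fact that a torsion-free $D$-module is the intersection of its localizations at the nonzero (hence, as $D$ is Dedekind, maximal) primes of $D$: since $\Int_K(A)$ is a $D$-submodule of the $K$-vector space $K[x]$, for $f \in \bigcap_Q \Int_K(A)_Q$ the conductor ideal $\{d \in D : df \in \Int_K(A)\}$ is nonzero and meets every $D\setminus Q$, hence lies in no maximal ideal, hence is all of $D$; so $f \in \Int_K(A)$. I would dispatch this first and then concentrate on proving $\Int_K(A)_Q = \Int_K(A_Q)$ for a fixed nonzero prime $Q$, where $A_Q = (D\setminus Q)^{-1}A = \bigcup_{s \in D\setminus Q} s^{-1}A$.

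For the inclusion $\Int_K(A_Q) \subseteq \Int_K(A)_Q$, I would exploit that $A$ is a ring that is finitely generated as a $D$-module, say $A = D\omega_1 + \dots + D\omega_r$. Given $f = \sum_{i=0}^m c_i x^i \in K[x]$ with $f(A_Q) \subseteq A_Q$, every power $a^i$ of an element $a \in A$ again lies in $A$, so $f(a) = \sum_i c_i a^i$ lies in the finitely generated $D$-module $M := \sum_i c_i A$ (generated by the $c_i\omega_j$). Hence the $D$-span $N$ of $f(A)$ is a submodule of $M$, so finitely generated because $D$ is Noetherian, while at the same time $N \subseteq A_Q$. Clearing a common denominator of the finitely many generators of $N$ yields $s \in D\setminus Q$ with $sf(A) \subseteq sN \subseteq A$, i.e.\ $sf \in \Int_K(A)$ and $f \in \Int_K(A)_Q$.

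For the reverse inclusion, since $A_Q$ is a $D_Q$-module, multiplying a polynomial by an element of $D\setminus Q$ (a unit of $D_Q$) does not affect whether it carries $A_Q$ into $A_Q$, so it suffices to show that every $g \in \Int_K(A)$ already satisfies $g(A_Q)\subseteq A_Q$. Here I would use that $A$ is $Q$-adically dense in $A_Q$: for $a = b/s \in A_Q$ with $b \in A$, $s \in D\setminus Q$, and any $N \ge 1$, the coprimality $(s) + Q^N = D$ (valid because $Q$ is maximal and $s \notin Q$) gives $u \in D$ and $q \in Q^N$ with $us + q = 1$, so $c := ub \in A$ satisfies $a - c = bq/s \in Q^N A_Q$. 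Expanding $g(a) - g(c) = \sum_i c_i(a^i - c^i)$ and using that $a - c$ divides each $a^i - c^i$ inside $A_Q$, one gets $g(a) - g(c) \in Q^N M_Q$, where $M_Q = \sum_i c_i A_Q$ is a fixed (independent of $N$) finitely generated $D_Q$-module. Since $g(c) \in A$, this places $g(a)$ in $\bigcap_N (A_Q + Q^N M_Q)$, and the Krull intersection theorem applied to the finitely generated module $(A_Q + M_Q)/A_Q$ over the Noetherian local ring $D_Q$ forces this intersection to equal $A_Q$; hence $g(a) \in A_Q$.

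I expect the reverse inclusion to be the main obstacle. The delicate point is that a polynomial whose coefficients lie only in $K$ does not obviously preserve the larger ring $A_Q$ merely because it preserves $A$, so one must genuinely invoke the $Q$-adic topology (density of $A$ in $A_Q$) together with a finiteness/completeness input (Krull intersection); keeping careful track of the ambient finitely generated $D_Q$-module, so that Krull intersection actually applies, is the one spot that requires care. The forward inclusion, by contrast, is a routine "clear the denominators" argument whose only substantive ingredients are the finite generation of $A$ over $D$ and the closure of $A$ under multiplication.
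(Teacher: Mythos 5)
The paper gives no proof of this proposition---it is quoted with a citation to [Wer3, Prop.\ 3.1, 3.2]---so there is nothing internal to compare against; your argument is a correct, self-contained proof along the standard lines used for $\Int(D)$ over Noetherian domains (density of $D$ in $D_Q$ for the $Q$-adic topology plus the Krull intersection theorem), transplanted to the algebra setting, together with the routine conductor-ideal and denominator-clearing arguments for the other two pieces. The only point deserving an explicit word is that $A$ need not be commutative (e.g.\ $A = M_n(D)$), so the claim that $a-c$ ``divides'' $a^i-c^i$ should be replaced by the telescoping identity $a^i - c^i = \sum_{j=0}^{i-1} a^j(a-c)c^{i-1-j}$, which still places $a^i-c^i$ in $Q^N A_Q$ because $Q^N$ is a central ideal; alternatively, observe that your particular $a=b/s$ and $c=ub$ are both $K$-multiples of the same element $b$ and hence commute. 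With that cosmetic repair, both inclusions and the global intersection formula are established, and the hypotheses enter exactly where they should: finite generation of $A$ and Noetherianity of $D$ in the ``clear denominators'' direction, and maximality of the nonzero prime $Q$ (automatic for a Dedekind domain) to get $(s)+Q^N = D$ in the density step.
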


Combining Construction \ref{The construction} with Lemma \ref{Global lemma} now allows us to produce polynomials that are properly integral over $\Int_K(M_n(D))$.

\begin{Thm}\label{Dedekind case}
Let $P$ be a nonzero prime of $D$. Let $V = D_P$ and let $F$ be the polynomial from Construction \ref{The construction} applied to $V$. Then, there exists $c \in D\setminus P$ such that $cF$ is properly integral over $\Int_K(M_n(D))$. In particular, if $P$ is principal, then we can take $c=1$.
\end{Thm}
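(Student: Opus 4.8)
The statement to prove is Theorem \ref{Dedekind case}, which asserts that for a nonzero prime $P$ of the Dedekind domain $D$, setting $V = D_P$ and taking $F$ from Construction \ref{The construction} applied to $V$, there exists $c \in D \setminus P$ with $cF$ properly integral over $\Int_K(M_n(D))$, and that $c$ may be taken to be $1$ when $P$ is principal. The strategy is to apply Lemma \ref{Global lemma} with $R = \Int_K(M_n(D))$ and $S = \Int_K(\Lambda_n(D))$ (the integral closure of $R$, by \cite[Cor.\ 16]{PerWer}), both of which sit between $D[x]$ and $K[x]$ as $D$-modules, and then localize at $P$.

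\emph{Step 1: Identify the localizations.} First I would note that localizing at $P$ commutes with forming both rings. For $R$ this is Proposition \ref{Localization prop}, which gives $\Int_K(M_n(D))_P = \Int_K(M_n(D_P)) = \Int_K(M_n(V))$. For $S$, one uses that $\Lambda_n(D)_P$ (in the appropriate sense) corresponds to $\Lambda_n(V)$, so that $\Int_K(\Lambda_n(D))_P = \Int_K(\Lambda_n(V))$; alternatively, since $S$ is the integral closure of $R$ and integral closure commutes with localization, $S_P = (R_P)^{\mathrm{int}} = \Int_K(\Lambda_n(V))$. Either way, $R_P \subsetneq S_P$ because $F \in \Int_K(\Lambda_n(V)) \setminus \Int_K(M_n(V))$ by Theorem \ref{Big thm}.

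\emph{Step 2: Clear denominators.} Now apply Lemma \ref{Global lemma} with this $P$ and with $f = F \in S_P \setminus R_P$. The lemma produces $c \in D \setminus P$ such that $cF \in S \setminus R$; that is, $cF \in \Int_K(\Lambda_n(D)) \setminus \Int_K(M_n(D))$, which by definition means $cF$ is properly integral over $\Int_K(M_n(D))$. Concretely, the proof of Lemma \ref{Global lemma} tells us to write $F = g/d$ with $g \in \Int_K(\Lambda_n(D))$, $d \in D \setminus P$, and $d \nmid g$ in $D[x]$, and then $c = d$ works; here $g = H(x)(\theta(x))^q$ and the natural denominator is $\pi^q$, so $c$ should be chosen to absorb the difference between $\pi^q$ and an actual element of $D \setminus P$.

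\emph{Step 3: The principal case.} When $P = \pi D$ is principal, one can take $\pi$ itself as the uniformizer in Construction \ref{The construction}, so $F = H(x)(\theta(x))^q / \pi^q$ already has the form $g/d$ with $g \in D[x]$ and $d = \pi^q \in D \setminus P$, with $\pi \nmid g$ mod $P$ (the argument of Lemma \ref{Big thm first lemma}, run over $D$, shows $g$ is not divisible by $\pi$). Then $c = 1$ suffices: $F$ itself already lies in $\Int_K(\Lambda_n(D))$ (check $F(\alpha) \in O_\alpha$ for $\alpha \in \Lambda_n(D)$ by localizing at $P$ and invoking Theorem \ref{Big thm}) and not in $\Int_K(M_n(D))$ (the companion matrix witness of Lemma \ref{Big thm first lemma} lives in $M_n(D)$).

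\emph{Main obstacle.} The only genuinely delicate point is Step 1 — verifying that $S_P = \Int_K(\Lambda_n(V))$, i.e.\ that passing to the integral closure commutes with localization at $P$ in this setting, and matching $\Lambda_n(D)$ localized at $P$ with $\Lambda_n(V)$. Once the localization identities are in hand, the rest is a direct application of the two lemmas and Theorem \ref{Big thm}, with only routine bookkeeping about which element of $D \setminus P$ to use as $c$.
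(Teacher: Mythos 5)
Your argument follows the paper's proof essentially step for step: Proposition \ref{Localization prop} to identify $R_P$ and $S_P$, Lemma \ref{Global lemma} applied to $f=F$ to produce $c$, and the observation that $c=1$ works when $P$ is principal. Two points need repair, both in Step 3. First, you write that $F = H(x)(\theta(x))^q/\pi^q$ ``has the form $g/d$ with $d=\pi^q \in D\setminus P$''; this is false, since $\pi\in P$. You are conflating the denominator of $F$ as a fraction over $V[x]$ with the denominator $d$ appearing in Lemma \ref{Global lemma}, which comes from the localization $S_P$ and must lie \emph{outside} $P$. The correct claim in the principal case is simply that $F$ itself already belongs to $S=\Int_K(\Lambda_n(D))$, i.e.\ one may take $g=F$ and $d=1$. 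Second, your justification of that claim --- ``localizing at $P$ and invoking Theorem \ref{Big thm}'' --- only yields $F\in S_P$; membership in $S$ requires checking all primes. The missing observation is that for every nonzero prime $Q\neq P$ the element $\pi^q$ is a unit in $D_Q$, so $F\in D_Q[x]\subseteq S_Q$, and therefore $F\in\bigcap_Q S_Q=S$. This is exactly how the paper closes the principal case. With these two fixes your proof coincides with the paper's.
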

\begin{proof}
By Proposition \ref{Localization prop}, $\Int_K(M_n(D))_P = \Int_K(M_n(V))$, and a similar argument shows that $\Int_K(\Lambda_n(D))_P = \Int_K(\Lambda_n(V))$. So, we can apply Lemma \ref{Global lemma} with $R = \Int_K(M_n(D))$, $S = \Int_K(\Lambda_n(D))$, and $f = F$. Furthermore, if $P$ is principal, then we can assume the denominator $\pi^q$ of $F$ is in $D$ and that $P = \pi D$. In this case, it is immediately seen that $F$ itself is already an element of $S\setminus R$, since $F\in S_Q$ for every prime ideal $Q$ of $D$ different from $P$ and $S=\bigcap_{Q}S_Q$, the intersection ranging over the set of all non-zero prime ideals of $D$.
\end{proof}

\begin{Cor}\label{Int_K not integrally closed}
Let $D$ be a Dedekind domain with finite residue fields. Let $K$ be the field of fractions of $D$ and let $n \geq 2$. Then, $\Int_K(M_n(D))$ is not integrally closed.
\end{Cor}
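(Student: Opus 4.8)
The plan is to deduce Corollary \ref{Int_K not integrally closed} directly from Theorem \ref{Dedekind case} together with the structural facts already assembled in the excerpt. Recall from Definition \ref{Properly integral} and the paragraph following it that $\Int_K(M_n(D))$ is integrally closed precisely when there are no properly integral polynomials over it — this uses the fact, also noted in the excerpt, that the integral closure of $\Int_K(M_n(D))$ in $K(x)$ sits inside $K[x]$, indeed equals $\Int_K(\Lambda_n(D))$ by \cite[Cor.\ 16]{PerWer}. So it suffices to exhibit a single properly integral polynomial.

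First I would pick any nonzero prime $P$ of $D$; such a prime exists because $D$ is a Dedekind domain (and hence not a field — or, if one worries about the field case, note that a field has no finite \emph{proper} residue rings of the required sort, so the hypotheses implicitly exclude it, and in any case the standard convention is that a Dedekind domain is not a field). Since $D$ has finite residue fields, the localization $V = D_P$ is a DVR with finite residue field, so Construction \ref{The construction} applies to $V$ and produces the polynomial $F$. By Theorem \ref{Big thm}, $F$ is properly integral over $\Int_K(M_n(V))$. Then Theorem \ref{Dedekind case} yields an element $c \in D \setminus P$ such that $cF$ is properly integral over $\Int_K(M_n(D))$: that is, $cF$ lies in $\Int_K(\Lambda_n(D))$, the integral closure, but not in $\Int_K(M_n(D))$ itself. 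This single witness shows the ring is not integrally closed.

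There is essentially no obstacle here; the corollary is a formal consequence of the theorem, and the only point requiring a word of care is the trivial observation that $D$ admits a nonzero prime and that $D_P$ is then a DVR with finite residue field (so that the DVR hypotheses of Construction \ref{The construction} are met). I would write the proof in two sentences: invoke Theorem \ref{Dedekind case} to get a properly integral polynomial, then cite the remark after Definition \ref{Properly integral} that the existence of such a polynomial is equivalent to non-integral-closedness. If one wants the statement to be self-contained for the reader, one can additionally recall that the integral closure of $\Int_K(M_n(D))$ equals $\Int_K(\Lambda_n(D))$ by \cite[Cor.\ 16]{PerWer}, making explicit where $cF$ lives.
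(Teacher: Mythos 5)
Your proposal is correct and is exactly the argument the paper intends: the corollary is stated as an immediate consequence of Theorem \ref{Dedekind case}, which supplies the properly integral polynomial $cF$, and the equivalence between integral closedness and the nonexistence of properly integral polynomials was already noted after Definition \ref{Properly integral}. Your additional remarks on the existence of a nonzero prime $P$ and on $D_P$ being a DVR with finite residue field are harmless bookkeeping that the paper leaves implicit.
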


Finally, we show that the polynomial in Construction \ref{The construction} can be applied to $D$-algebras other than matrix rings. Consider a torsion-free $D$-algebra $A$ that is finitely generated as a $D$-module and such that $A \cap K = D$. If $A$ has a generating set consisting of at most $n$ elements, then each element of $A$ satisfies a monic polynomial in $D[x]$ of degree at most $n$ (see for example \cite[Thm. 1, Chap. V]{Bourbaki} or \cite[Prop. 2.4, Chap. 2]{AtiMac}). It is then a consequence of \cite[Lem.\ 3.4]{Fri2} that $\Int_K(M_n(D)) \subseteq \Int_K(A)$, and thus that the integral closure of $\Int_K(A)$ contains $\Int_K(\Lambda_n(D))$.

\begin{Cor}\label{Algebra case}
Let $D$ and $A$ be as above. Assume that there exists a nonzero prime $P$ of $D$ such that $A/P^q A \cong M_n(D/P^q)$, where $q = |D/P|$. Let $cF$ be as in Theorem \ref{Dedekind case}. Then, $cF$ is properly integral over $\Int_K(A)$. Thus, $\Int_K(A)$ is not integrally closed.
\end{Cor}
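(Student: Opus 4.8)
The statement has two halves: that $cF$ is integral over $\Int_K(A)$, and that $cF \notin \Int_K(A)$. The first is immediate from what precedes: by Theorem~\ref{Dedekind case}, $cF$ lies in the integral closure of $\Int_K(M_n(D))$, which by \cite[Cor.\ 16]{PerWer} is $\Int_K(\Lambda_n(D))$; and, as noted just before the statement, $\Int_K(\Lambda_n(D))$ is contained in the integral closure of $\Int_K(A)$. So the whole task is to show $cF \notin \Int_K(A)$, after which the failure of integral closedness of $\Int_K(A)$ follows at once.

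For this, the plan is to transport the computation of Lemma~\ref{Big thm first lemma} from the matrix ring to $A$ through the hypothesized isomorphism. First I would localize at $P$: by Proposition~\ref{Localization prop}, $\Int_K(A)_P = \Int_K(A_P)$, so it suffices to produce some $\beta \in A_P$ with $(cF)(\beta) \notin A_P$. Writing $V = D_P$ with uniformizer $\pi$, the canonical identifications $D/P^q \cong V/\pi^q V$ and $A/P^q A \cong A_P/\pi^q A_P$ turn the hypothesis $A/P^q A \cong M_n(D/P^q)$ into a $V/\pi^q V$-algebra isomorphism $\phi\colon A_P/\pi^q A_P \to M_n(V/\pi^q V)$. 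Let $C \in M_n(V)$ be the companion matrix of $x^n$, let $\overline{C}$ be its class mod $\pi^q$, and pick $\beta \in A_P$ lifting $\phi^{-1}(\overline{C})$. Since $\phi$ is a ring homomorphism matching the scalar copies of $V/\pi^q V$ on both sides, and since reduction mod $\pi^q$ commutes with polynomial evaluation, for every $g \in V[x]$ the class of $g(\beta)$ in $A_P/\pi^q A_P$ corresponds under $\phi$ to the class of $g(C)$ in $M_n(V/\pi^q V)$.

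I would then apply this with $g = H\theta^q$, the numerator of $F$. The proof of Lemma~\ref{Big thm first lemma} already establishes that, over $V$, one has $H(C)\theta(C)^q = \pi^{q-1}\,u\,C^{n-1}Z$, where $u \in V^{\times}$ is a scalar and $Z = \big(\prod_{a \in \F_q^{\times}} h(C - aI)\big)\theta(C)^q$ is invertible over $V$ (being a unit mod $\pi$). As $C^{n-1} \not\equiv 0 \pmod{\pi}$ and $Z$ is invertible, $u\,C^{n-1}Z \notin \pi M_n(V)$, so $H(C)\theta(C)^q \notin \pi^q M_n(V)$ and its class mod $\pi^q$ is nonzero. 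Transporting this back through $\phi$ gives $H(\beta)\theta(\beta)^q \notin \pi^q A_P$, hence $F(\beta) = \pi^{-q}H(\beta)\theta(\beta)^q \notin A_P$; and since $c \in D \setminus P$ is a unit in $V$, also $(cF)(\beta) = c\,F(\beta) \notin A_P$. Thus $cF \notin \Int_K(A_P) = \Int_K(A)_P$, so $cF \notin \Int_K(A)$, which together with the first paragraph makes $cF$ properly integral over $\Int_K(A)$.

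I expect the only real obstacle to be bookkeeping: keeping the chain $A/P^q A \cong A_P/\pi^q A_P \cong M_n(V/\pi^q V)$ straight, and confirming that $\phi$ respects polynomial evaluation, so that the ``divisible by exactly $\pi^{q-1}$'' fact proved over $M_n(V)$ genuinely transfers to $A$. No new arithmetic is required---the decisive computation is precisely the one carried out in Lemma~\ref{Big thm first lemma}.
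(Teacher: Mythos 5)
Your proposal is correct and follows essentially the same route as the paper: both halves are handled identically, with the membership in the integral closure coming from $\Int_K(\Lambda_n(D))$, and the non-membership proved by localizing at $P$ via Proposition~\ref{Localization prop} and transporting the companion-matrix computation of Lemma~\ref{Big thm first lemma} through the isomorphism $A_P/\pi^q A_P \cong M_n(D/P^q)$. The only difference is cosmetic: you exhibit an explicit witness $\beta$ lifting the companion matrix, while the paper argues by contradiction that the numerator of $F$ cannot vanish on all of $A_P/\pi^q A_P$.
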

\begin{proof}
The polynomial $cF$ is in the integral closure of $\Int_K(A)$ because this integral closure contains $\Int_K(\Lambda_n(D))$. To show that $cF \notin \Int_K(A)$, we will work with localizations. Localize the algebra $A$ in the natural way to produce the $D_P$-algebra $A_P$. Let $\pi$ be the generator of $PD_P$. Then, for all $k > 0$, we have $A_P/\pi^k A_P = A_P/P^k A_P \cong A/P^k A$. In particular, $A_P/\pi^q A_P \cong M_n(D/P^q)$. 

Now, by Proposition \ref{Localization prop}, we see that $\Int_K(A)_Q = \Int_K(A_Q)$ for all nonzero primes $Q$ of $D$, and $\Int_K(A) = \bigcap_Q \Int_K(A)_Q$. So, to prove that $cF \notin \Int_K(A)$, it suffices to show that $cF \notin \Int_K(A_P)$, and since $c$ is a unit of $D_P$, it will be enough to demonstrate that $F \notin \Int_K(A_P)$. Suppose by way of contradiction that $F \in \Int_K(A_P)$. Then, the numerator $G$ of $F$ is such that $G(A_P) \subseteq \pi^q A_P$; equivalently, $G(A_P/\pi^q A_P)$ is 0 mod $\pi^q$. However, because $A_P/\pi^q A_P \cong M_n(D/P^q)$, the argument used Lemma \ref{Big thm first lemma} shows that this is impossible. Thus, we conclude that $F \notin \Int_K(A_P)$.
\end{proof}

\begin{Ex}
Corollary \ref{Algebra case} can be applied when $D = \Z$ and $A$ is a certain quaternion algebra. Let $\bfi$, $\bfj$, and $\bfk$ be such that $\bfi^2 = \bfj^2 = -1$ and $\bfi \bfj = \bfk = -\bfj \bfi$. Let $A$ be either the Lipschitz quaternions
\begin{equation*}
A = \{a_0 + a_1 \bfi + a_2 \bfj + a_3 \bfk \mid a_i \in \Z\}
\end{equation*}
or the Hurwitz quaternions
\begin{equation*}
A = \{a_0 + a_1 \bfi + a_2 \bfj + a_3 \bfk \mid a_i \in \Z \text{ for all } i \text{ or } a_i \in \Z + \tfrac{1}{2} \text{ for all } i\}.
\end{equation*}
In either case, it is a standard exercise (cf.\ \cite[Exer.\ 3A]{GooWar}) that for each odd prime $p$ and each $k > 0$, we have $A/p^k A \cong M_2(\Z/p^k \Z)$. Hence, Corollary \ref{Algebra case} applies, and the polynomial $F$ from Construction \ref{The construction} is properly integral over $\Int_\Q(A)$.

In particular, consider the polynomial $F$ obtained when $p = 3$. The numerator of $F$ is
\begin{align*}
G(x) &= x(x^2+3)(x^2+6)(x-1)((x-1)^2+3)((x-1)^2+6)\\
& \quad \times (x-2)((x-2)^2+3)((x-2)^2+6)(x^2+1)^3(x^2+x+2)^3(x^2+2x+2)^3
\end{align*}
and $F(x) = G(x)/27$, a polynomial of degree 33 that is properly integral over $\Int_{\Q}(A)$.

By contrast, a polynomial $g(x)/27 \in \Int_\Q(A)$ (with $g(x) \in \Z[x]$ not divisible by 3) must have degree at least 36. Indeed, the isomorphisms $A/3^k A \cong M_2(\Z/3^k \Z)$, $k\in\N$, imply that a polynomial $g_1(x)/3^{k_1}$ (with $g_1 \in \Z[x]$ not divisible by 3, and $k_1 > 0$) is in $\Int_\Q(A)$ if and only if it is in $\Int_\Q(M_2(\Z))$. As our results in Section \ref{Null ideals} (such as Corollary \ref{Formula for mu_d}) will show, if $g(x)/27 \in \Int_\Q(M_2(\Z))$, then $\deg(g) \geq 36$. Explicitly, $[(x^9-x)(x^3-x)]^3/27 \in \Int_\Q(M_2(\Z))$ (and hence is in $\Int_\Q(A)$ as well), and there is no polynomial of the form $g(x)/27$ of smaller degree in $\Int_\Q(A)$.
\end{Ex}

\section{\texorpdfstring{Null Ideals and $\pi$-sequences}{Null Ideals and p-sequences}}\label{Null ideals}
Maintain the notation given at the start of Section \ref{Construction section}. Our work so far shows that the polynomial $F$ from Construction \ref{The construction} is properly integral over $\Int_K(M_n(V))$. However, it is possible that there could be a polynomial of degree less than $F$ that is also properly integral over $\Int_K(M_n(V))$. This inspires the next definition.

\begin{Def}
Let $V$ be a DVR with fraction field $K$. A polynomial $f \in K[x]$ that is properly integral over $\Int_K(M_n(V))$ is said to be \textit{optimal} if $f$ is of minimal degree among all properly integral polynomials over $\Int_K(M_n(V))$. 
\end{Def}

We are interested in determining whether our polynomial $F$ is optimal. In general, this is quite hard to do. One way to make progress is to follow the lead of \cite{EvrJoh} and study $P$-sequences for $\Int_K(M_n(V))$ and $\Int_K(\Lambda_n(V))$.

Bhargava introduced $P$-sequences and $P$-orderings for Dedekind domains in \cite{Bha}, and these notions were extended to certain noncommutative rings by Johnson in \cite{Joh}. Among other uses, $P$-sequences and $P$-orderings can be used to give regular bases for rings of integer-valued polynomials (see \cite{Bha}, \cite{Joh}, and \cite{EvrJoh}). For our purposes, $P$ refers to the maximal ideal $\pi V$ of $V$, and we will consider $\pi$-sequences for $\Int_K(M_n(V))$ and $\Int_K(\Lambda_n(V))$.

Recall first Johnson's definition from \cite{Joh}, and its connection to integer-valued polynomials.

\begin{Def}\label{Johnson def}
(\cite[Def. 1.1]{Joh}) Let $K$ be a local field with valuation $v$, $D$ a division algebra over $K$ to which the valuation $v$ extends, $R$ the maximal order in $D$, and $S$ a subset of $R$. Then, a \textit{v-ordering} of $S$ is a sequence $\{a_i \mid i \in \N\} \subseteq S$ with the property that for each $i > 0$ the element $a_i$ minimizes the quantity $v(f_i(a_0, \ldots, a_{i-1})(a))$ over $a \in S$, where $f_0 = 1$ and, for $i > 0$, $f_i(a_0, \ldots, a_{i-1})(x)$ is the minimal polynomial (in the sense of \cite{LamLer}) of the set $\{a_0, a_1, \ldots, a_{i-1}\}$. The sequence of valuations $\{v(f_i(a_0, \ldots, a_{i-1})(a_i)) \mid i \in \N\}$ is called the \textit{v-sequence} of $S$.
\end{Def}

\begin{Prop}\label{Johnon prop}
(\cite[Prop. 1.2]{Joh}) With notation as in Definition \ref{Johnson def}, let $\pi \in R$ be a uniformizing element. Then, the $v$-sequence $\{\alpha_S(i) = v(f_i(a_0, \ldots, a_{i-1})(a_i)) \mid i \in \N \}$ depends only on the set $S$ and not on the choice of $v$-ordering. Moreover, the sequence of polynomials
\begin{equation*}
\{\pi^{-\alpha_S(i)} f_i(a_0, \ldots, a_{i-1})(x) \mid i \in \N\}
\end{equation*}
forms a regular $R$-basis for the $R$-algebra of polynomials integer-valued on $S$.
\end{Prop}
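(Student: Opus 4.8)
The statement is the noncommutative counterpart of Bhargava's foundational results on $P$-orderings and $P$-sequences \cite{Bha}, and I would prove it by transporting his arguments to the division-algebra setting, with the Lam--Leroy minimal polynomial $f_i(a_0,\ldots,a_{i-1})(x)$ taking over the role that $\prod_{j<i}(x-a_j)$ plays in the commutative case. The facts I would record at the outset are: each $f_i(a_0,\ldots,a_{i-1})$ is monic with $f_i(a_0,\ldots,a_{i-1})(a_j)=0$ for $j<i$; by the definition of a $v$-ordering, $\alpha_S(i)=\min_{a\in S}v\big(f_i(a_0,\ldots,a_{i-1})(a)\big)$, the minimum attained at $a=a_i$; evaluation at a fixed point is additive and commutes with multiplication by left scalars; and, granting that $S$ is non-degenerate so that the minimal polynomials do not stabilize in degree, $\deg f_i(a_0,\ldots,a_{i-1})=i$, so these polynomials form a $D$-basis of each space of polynomials of bounded degree. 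My plan is to prove the regular-basis assertion first, for a fixed but arbitrary $v$-ordering, and then deduce independence of $\alpha_S$ as a corollary.

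For the basis assertion, set $\varphi_i(x)=\pi^{-\alpha_S(i)}f_i(a_0,\ldots,a_{i-1})(x)$ and $\Int(S)=\{f\in D[x]\mid f(S)\subseteq R\}$. Each $\varphi_i$ has degree $i$ and lies in $\Int(S)$, precisely because $v\big(f_i(a_0,\ldots,a_{i-1})(a)\big)\ge\alpha_S(i)$ for all $a\in S$; and the $\varphi_i$ are left-$R$-linearly independent, having distinct degrees over the domain $R$. For spanning, take $g\in\Int(S)$ with $\deg g=d$ and write $g=\sum_{i=0}^{d}c_i\,f_i(a_0,\ldots,a_{i-1})$ with $c_i\in D$, which is possible since the $f_i$ are monic of degrees $0,\ldots,d$. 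Evaluating in turn at $a_0,a_1,\ldots,a_d$ and using $f_i(a_0,\ldots,a_{i-1})(a_k)=0$ for $i>k$ gives, at stage $k$, the identity $g(a_k)=\sum_{i=0}^{k}c_i\,f_i(a_0,\ldots,a_{i-1})(a_k)$; by the inductive bound $v(c_i)\ge-\alpha_S(i)$ for $i<k$ together with $v\big(f_i(a_0,\ldots,a_{i-1})(a_k)\big)\ge\alpha_S(i)$, every term with $i<k$ lies in $R$, and since $g(a_k)\in R$ one concludes $v(c_k)+\alpha_S(k)\ge 0$, i.e.\ $c_k\pi^{\alpha_S(k)}\in R$. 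Hence $g=\sum_{i}\big(c_i\pi^{\alpha_S(i)}\big)\varphi_i$ exhibits $g$ as a left-$R$-combination of the $\varphi_i$, so $\{\varphi_i\}$ is a regular $R$-basis of $\Int(S)$.

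To deduce that $\alpha_S$ is independent of the chosen $v$-ordering, I would observe that it is intrinsic to $\Int(S)$. The coefficient-of-$x^d$ map $\ell_d\colon\{g\in\Int(S)\mid\deg g\le d\}\to D$ is a left-$R$-module homomorphism with kernel $\{g\in\Int(S)\mid\deg g\le d-1\}$, and from the basis description its image is $\{x\in D\mid v(x)\ge-\alpha_S(d)\}=\pi^{-\alpha_S(d)}R$. Thus $-\alpha_S(d)$ is the least valuation attained on a subset of $D$ defined purely in terms of $\Int(S)$ and $d$, so $\alpha_S(d)$ is the same for every $v$-ordering.

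The combinatorial content above is light; the real work lies in the foundations, and the step I expect to be the main obstacle is establishing $\deg f_i(a_0,\ldots,a_{i-1})=i$ along any $v$-ordering, together with the basic calculus of Lam--Leroy evaluation over a noncommutative $D$ (additivity, behaviour under left scalars, and the vanishing $f_i(a_0,\ldots,a_{i-1})(a_k)=0$ for $i>k$) on which every manipulation silently relies. The degree claim genuinely needs a hypothesis on $S$: for a degenerate $S$ the minimal polynomials stabilize, two of the $\varphi_i$ coincide, and the basis fails to be regular, so the statement is valid only for sets $S$ of the sort that actually arise here.
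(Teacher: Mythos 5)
This proposition is quoted verbatim from Johnson \cite[Prop.\ 1.2]{Joh}; the paper gives no proof of it, so there is nothing internal to compare against. Your argument is essentially the standard Bhargava-style proof that Johnson's paper adapts to the division-algebra setting, and it is correct: the triangular evaluation argument for spanning goes through because Lam--Leroy evaluation is additive and commutes with left scalar multiplication, and your characterization of $-\alpha_S(d)$ as the least valuation of a leading coefficient of a degree-$d$ integer-valued polynomial is exactly the right intrinsic description to get independence of the ordering. You also correctly isolate the genuine foundational inputs (the vanishing $f_i(a_0,\ldots,a_{i-1})(a_j)=0$ for $j<i$, and $\deg f_i=i$, which along a $v$-ordering follows because each chosen $a_i$ satisfies $f_i(a_0,\ldots,a_{i-1})(a_i)\neq 0$ whenever the $v$-sequence is finite, so the Lam--Leroy rank increases by one at each step); for the sets $S$ arising in this paper (maximal orders) no degeneracy occurs.
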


In \cite{EvrJoh}, Evrard and Johnson used these notions to construct $p$-sequences ($p$ a prime of $\Z$) and regular bases for $\Int_\Q(M_2(\Z_{(p)}))$ and its integral closure $\Int_\Q(R_{2, p})$ (here, $R_{2,p}$ is the maximal order of a division algebra of degree $4$ over the field of $p$-adic numbers). We take a slightly different approach and define our $\pi$-sequences with regular bases and optimal polynomials in mind.

\begin{Def}\label{pisequence}
Express polynomials in $K[x]$ in lowest terms, i.e.\ in the form $g(x)/\pi^k$, where $g \in V[x]$, $k \geq 0$, and, if $k > 0$, then $\pi$ does not divide $g$. The $\pi$-sequence $\mu_0, \mu_1, \ldots$ of $\Int_K(M_n(V))$ is the sequence of non-negative integers such that
\begin{equation*}
\mu_d = \max\{k \mid \text{ there exists } g_d(x)/\pi^k \in \Int_K(M_n(V)) \text{ of degree } d\}.
\end{equation*}
In other words, having $\mu_d = k$ means there exists $g_d(x) \in V[x]$ of degree $d$ such that $g_d(x)/\pi^k \in \Int_K(M_n(V))$ with $k$ as large as possible.

The $\pi$-sequence $\lambda_0, \lambda_1, \ldots$ of $\Int_K(\Lambda_n(V))$ is defined similarly.
\end{Def}

\begin{Lem}\label{Regular basis lemma}
For each $d \in \N$, let $f_d$ be a polynomial of degree $d$ in $V[x] \setminus \pi V[x]$, and let $\alpha_d$ be a non-negative integer. If $\{f_d(x)/\pi^{\alpha_d} \mid d \in \N\}$ is a regular $V$-basis for $\Int_K(M_n(V))$ (respectively, $\Int_K(\Lambda_n(V))$), then $\mu_d = \alpha_d$ (respectively, $\lambda_d = \alpha_d$) for all $d$.
\end{Lem}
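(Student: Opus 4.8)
The plan is to show that the maximal power of $\pi$ dividing a degree-$d$ polynomial is exactly the exponent appearing in the regular basis. Suppose $\{f_d(x)/\pi^{\alpha_d}\}$ is a regular $V$-basis for $\Int_K(M_n(V))$. Recall that ``regular'' means that for each $d$, the first $d+1$ basis elements $f_0/\pi^{\alpha_0}, \ldots, f_d/\pi^{\alpha_d}$ form a $V$-module basis for the submodule of $\Int_K(M_n(V))$ consisting of polynomials of degree at most $d$. I would first establish the inequality $\mu_d \geq \alpha_d$: this is immediate, since $f_d(x)/\pi^{\alpha_d}$ is itself an element of $\Int_K(M_n(V))$ of degree $d$, and $f_d \in V[x]\setminus \pi V[x]$, so by Definition \ref{pisequence} this polynomial exhibits a valid value of $k$, forcing $\mu_d \geq \alpha_d$.

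The reverse inequality $\mu_d \leq \alpha_d$ is the substantive part. Take any $g_d(x)/\pi^k \in \Int_K(M_n(V))$ with $\deg g_d = d$ and $\pi \nmid g_d$; I must show $k \leq \alpha_d$. Since $g_d/\pi^k$ has degree $d$, by regularity it lies in the $V$-span of $f_0/\pi^{\alpha_0}, \ldots, f_d/\pi^{\alpha_d}$, so we can write
\begin{equation*}
\frac{g_d(x)}{\pi^k} = \sum_{j=0}^{d} c_j \frac{f_j(x)}{\pi^{\alpha_j}}, \qquad c_j \in V.
\end{equation*}
Comparing leading coefficients (degree $d$): only the $j=d$ term contributes, giving $\mathrm{lc}(g_d)/\pi^k = c_d\,\mathrm{lc}(f_d)/\pi^{\alpha_d}$, hence $\mathrm{lc}(g_d)\,\pi^{\alpha_d} = c_d\,\mathrm{lc}(f_d)\,\pi^{k}$. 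If $k > \alpha_d$, this says $\pi^{k-\alpha_d}$ divides $\mathrm{lc}(g_d)\,\pi^{\alpha_d}/\pi^{\alpha_d}$... more carefully: I would instead clear denominators in the whole expansion by multiplying through by $\pi^k$, obtaining $g_d(x) = \sum_j c_j \pi^{k-\alpha_j} f_j(x)$, and argue that if $k > \alpha_d$ then every term on the right is divisible by $\pi$ (using that $\alpha_j \leq \alpha_d$ for $j \leq d$, which itself follows from regularity since $f_j/\pi^{\alpha_j+1} \notin \Int_K(M_n(V))$ would contradict nothing directly — so I actually need the cleaner route below). The clean route: from $\mathrm{lc}(g_d)\pi^{\alpha_d - k} = c_d \,\mathrm{lc}(f_d)$ with $c_d \in V$ and $\pi \nmid \mathrm{lc}(g_d)$... this only works if $\alpha_d \geq k$. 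So suppose for contradiction $k \geq \alpha_d + 1$. Then $h(x) := g_d(x)/\pi^{\alpha_d+1}$ would need to be examined; but $h$ need not be in the ring. The correct argument: $\pi \cdot (g_d/\pi^k) = g_d/\pi^{k-1}$, and iterating, $g_d/\pi^{\alpha_d}$ lies in $\Int_K(M_n(V))$ (as $k - \alpha_d \geq 0$ applications of the fact that $\pi R \subseteq R$ run backwards — wait, that needs $k \geq \alpha_d$, which is what we want to disprove is strict). Let me restate: since $\pi^{k} \cdot (g_d/\pi^k) = g_d \in V[x]$ trivially, and since $g_d/\pi^k \in \Int_K(M_n(V))$, the polynomial $g_d/\pi^{\alpha_d} = \pi^{k-\alpha_d}(g_d/\pi^k)$ lies in $\Int_K(M_n(V))$ provided $k \geq \alpha_d$; this gives nothing. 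The genuine obstruction-free argument is the leading-coefficient one I'd carry out carefully: expand $g_d/\pi^k$ in the regular basis, multiply by $\pi^k$ to get $g_d = \sum_{j \leq d} c_j \pi^{k - \alpha_j} f_j$ in $V[x]$; since the left side is not divisible by $\pi$, not all exponents $k - \alpha_j$ can be positive, so some $\alpha_j \geq k$ with $j \leq d$, whence (using $\alpha_0 \leq \alpha_1 \leq \cdots \leq \alpha_d$, which holds because the $\alpha_d$ form a $\pi$-sequence and such sequences are nondecreasing — or can be derived from regularity directly) we get $\alpha_d \geq \alpha_j \geq k$, i.e.\ $k \leq \alpha_d$.

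Combining the two inequalities gives $\mu_d = \alpha_d$, and the identical argument with $\Int_K(\Lambda_n(V))$ in place of $\Int_K(M_n(V))$ and $\lambda_d$ in place of $\mu_d$ establishes the parenthetical statement. The main obstacle is the bookkeeping in the reverse inequality: one must justify that the exponents $\alpha_j$ are nondecreasing in $j$ (or circumvent this) and handle the comparison of leading coefficients cleanly, making sure the hypothesis $f_d \in V[x] \setminus \pi V[x]$ is used to conclude $\mathrm{lc}(f_d)$ may be a non-unit but $g_d$ being $\pi$-primitive forces the exponent bound. I expect no deep difficulty here — it is essentially the standard fact that a regular basis realizes the optimal denominators degree by degree — but the argument must be phrased so that it does not secretly assume what it is proving.
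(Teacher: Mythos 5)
Your strategy is more elementary and genuinely different from the paper's (which invokes characteristic ideals and \cite[Prop.\ II.1.4]{CahCha} to say that polynomials realizing the $\mu_d$ themselves form a regular basis, and then uses the fact that any two regular bases have leading coefficients of equal valuation in each degree). The easy inequality $\mu_d\ge\alpha_d$ and the denominator-clearing step ($g_d=\sum_{j\le d}c_j\pi^{k-\alpha_j}f_j$, hence $\alpha_j\ge k$ for some $j\le d$) are both correct. But the last link in your chain --- that $\alpha_j\le\alpha_d$ for $j\le d$ --- is a genuine gap, and neither justification you offer for it stands. Saying the $\alpha_d$ ``form a $\pi$-sequence and such sequences are nondecreasing'' is circular: the $\alpha_d$ are a priori just the denominators of some given regular basis, and their identification with the $\pi$-sequence $\mu_d$ is exactly what the lemma asserts. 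Nor can monotonicity ``be derived from regularity directly'': for a general $V$-module $M$ with $V[x]\subseteq M\subseteq K[x]$ it is false. For instance, $M=\pi^{-2}V+xV[x]$ has regular basis $\{\pi^{-2},x,x^2,\dots\}$, so $\alpha_0=2$ and $\alpha_1=0$, while $(1+\pi^2x)/\pi^2\in M$ gives $\mu_1\ge 2>\alpha_1$; the conclusion of the lemma itself fails here. So the needed monotonicity must come from structure of $\Int_K(M_n(V))$ and $\Int_K(\Lambda_n(V))$ that your argument never uses.

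What closes the gap is the fact that the optimal denominator $\mu_d$ is always realized by a \emph{monic} numerator. For $\Int_K(M_n(V))$ this follows from Lemma \ref{nullideal and intvalpolynomials} together with the description of the null ideal $N_k$ by monic generators $\phi_k,\pi\phi_{k-1},\dots$ from \cite[Thm.\ 5.4]{Wer2}: if some $g/\pi^{\mu_d}$ of degree $d$ with $\pi\nmid g$ lies in the ring, then so does $x^{d-\deg\phi_{\mu_d}}\phi_{\mu_d}/\pi^{\mu_d}$, which is monic of degree $d$. Once you have a monic witness $h_d/\pi^{\mu_d}$, expand it in the regular basis and compare top-degree coefficients: $1/\pi^{\mu_d}=c_d\,\mathrm{lc}(f_d)/\pi^{\alpha_d}$ with $c_d\in V$ forces $\alpha_d=\mu_d+v(c_d)+v(\mathrm{lc}(f_d))\ge\mu_d$, which together with your easy inequality finishes the proof --- no monotonicity of the $\alpha_j$ required. (A parallel argument is needed for $\Int_K(\Lambda_n(V))$.) This extra input is precisely what the paper's appeal to characteristic ideals packages; without it, your proof as written does not go through at its key step.
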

\begin{proof}
We will prove this for $\Int_K(M_n(V))$ and $\mu_d$; the proof for $\Int_K(\Lambda_n(V))$ and $\lambda_d$ is identical. By means of the notion of characteristic ideals and using \cite[Prop.\ II.1.4]{CahCha}, the sequence $\{g_d(x)/\pi^{\mu_d} \mid d \in \N\}$ forms a regular $V$-basis for $\Int_K(M_n(V))$. Let $\{f_d(x)/\pi^{\alpha_d} \mid d \in \N\}$ be another regular $V$-basis for $\Int_K(M_n(V))$. Then, we must have $\mu_d = \alpha_d$, because the leading coefficients of the elements of two regular bases of the same degree must have the same valuation.
\end{proof}

Relating the previous definition and lemma to the work done in \cite{EvrJoh}, we obtain the following.

\begin{Cor}\label{lambda and p-sequence}
Let $n = 2$, let $p$ be a prime of $\Z$, and let $V = \Z_{(p)}$. Then, $\lambda_d$ is equal to the $p$-sequence for $\Int_\Q(R_{2, p})$ given in \cite[Cor. 2.17]{EvrJoh}.
\end{Cor}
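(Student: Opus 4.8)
The plan is to identify the $v$-sequence from \cite{EvrJoh} with the $\pi$-sequence $\lambda_d$ of $\Int_K(\Lambda_n(V))$ in the case $n=2$, $V=\Z_{(p)}$, by passing through regular bases. The bridge between the two definitions is Lemma \ref{Regular basis lemma} together with Proposition \ref{Johnon prop}: the latter produces a regular basis of $v$-ordering type, the former shows that any regular basis of that form pins down $\lambda_d$ as the valuation of the leading coefficient.

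First I would set up the dictionary. By \cite[Cor.\ 16]{PerWer}, the integral closure of $\Int_\Q(M_2(\Z_{(p)}))$ is $\Int_\Q(\Lambda_2(\Z_{(p)}))$, and by the main result of \cite{EvrJoh} this coincides with $\Int_\Q(R_{2,p})$, where $R_{2,p}$ is the maximal order in the relevant degree-$4$ division algebra. Concretely, a polynomial $f\in\Q[x]$ is integer-valued on all of $\Lambda_2(\Z_{(p)})$ if and only if it is integer-valued on $R_{2,p}$; this is the sense in which ``the $\Q[x]$-algebra of polynomials integer-valued on $S=R_{2,p}$'' of Proposition \ref{Johnon prop} equals $\Int_\Q(\Lambda_2(\Z_{(p)}))$. (Here one must pass to the $p$-adic completion to fit the hypotheses of Definition \ref{Johnson def} — $K$ a local field — but completion does not change the integer-valued polynomial ring or its $\pi$-sequence, since both are determined by congruence conditions modulo powers of $p$; I would note this compatibility explicitly.)

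Next I would invoke Proposition \ref{Johnon prop}: fixing a $v$-ordering of $S=R_{2,p}$, the polynomials $\{p^{-\alpha_S(d)} f_d(x)\mid d\in\N\}$ form a regular $\Z_{(p)}$-basis for $\Int_\Q(\Lambda_2(\Z_{(p)}))$, where $f_d$ has degree $d$ (being the minimal polynomial, in the sense of \cite{LamLer}, of a $d$-element set, it is monic of degree $d$, hence lies in $\Z_{(p)}[x]\setminus p\Z_{(p)}[x]$), and $\alpha_S(d)$ is the $v$-sequence value. Applying Lemma \ref{Regular basis lemma} with this basis gives $\lambda_d=\alpha_S(d)$ for every $d$. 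Finally, \cite[Cor.\ 2.17]{EvrJoh} is precisely the computation of this $v$-sequence $\alpha_S(d)$ for $S=R_{2,p}$, which they call the $p$-sequence for $\Int_\Q(R_{2,p})$; so $\lambda_d$ equals that $p$-sequence, as claimed.

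The main obstacle I expect is not any deep argument but the bookkeeping needed to make the three frameworks line up: \cite{EvrJoh} works with $R_{2,p}$ and a $p$-adic division algebra, Proposition \ref{Johnon prop} is stated for a set $S$ in a maximal order over a local field, and Definition \ref{pisequence} works with $\Lambda_2(\Z_{(p)})$ over the global field $\Q$. One must check that (i) $\Int$ over the completion agrees with $\Int$ over $\Z_{(p)}$, so the global and local $\pi$-sequences coincide; (ii) $R_{2,p}$ really is the maximal order figuring in \cite{PerWer}'s description of the integral closure, so that integer-valued-on-$R_{2,p}$ equals integer-valued-on-$\Lambda_2$; and (iii) the ``regular basis'' of Proposition \ref{Johnon prop} satisfies the hypotheses of Lemma \ref{Regular basis lemma} (monic representatives, correct degrees). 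Each of these is routine given the cited results, but they must all be stated to make the identification rigorous.
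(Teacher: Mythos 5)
Your proposal is correct and follows exactly the route the paper intends: the corollary is stated as an immediate consequence of combining the regular basis supplied by Proposition \ref{Johnon prop} (via a $v$-ordering of $R_{2,p}$, whose ring of integer-valued polynomials is identified with $\Int_\Q(\Lambda_2(\Z_{(p)}))$ through \cite[Cor.\ 16]{PerWer} and \cite{EvrJoh}) with Lemma \ref{Regular basis lemma}, which forces $\lambda_d=\alpha_S(d)$. Your extra care about the localization-versus-completion bookkeeping is a reasonable elaboration of what the paper leaves implicit, not a different argument.
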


Returning now to the question of optimal properly integral polynomials, we can phrase things in terms of $\pi$-sequences. Since $\Int_K(M_n(V)) \subsetneqq \Int_K(\Lambda_n(V))$, we have $\mu_d \leq \lambda_d$ for all $d$, and there exists $d$ such that $\mu_d < \lambda_d$. Assume we have found the smallest $d$ such that $\mu_d < \lambda_d$. Then, there exists a properly integral polynomial $f(x) = g(x)/\pi^{\lambda_d}$ of degree $d$, and $f(x)$ is optimal.

By Corollary \ref{lambda and p-sequence}, when $n = 2$ and $p$ is a prime of $\Z$, the terms of $\lambda_d$ can be computed by using recursive formulas given in \cite{EvrJoh}. For the general case where $n \geq 2$ and $V$ is a DVR, we now proceed to use the null ideals of the matrix rings $M_n(V/\pi^k V)$ to compute the initial terms of $\mu_d$, although we will not be able to give a formula for the complete sequence. Nevertheless, we will be able to prove (Corollary \ref{When F is optimal}) that the properly integral polynomial $F$ constructed for $\Int_\Q(M_2(\Z_{(p)}))$ is optimal.

We first recall the definition of a null ideal.

\begin{Def}\label{Null ideal def}
Let $R$ be a commutative ring, and let $S$ be a subset of some ring containing $R$. We define the \textit{null ideal} of $S$ in $R$ to be $N_R(S) = \{ f \in R[x] \mid f(S) = 0\}$.
\end{Def}

There is a strong connection between null ideals and integer-valued polynomials, as described in the next lemma. This relationship has been used before in various forms (see \cite{Fri2}, \cite{PerDivDiff}, \cite{Wer3}, and \cite{Wer2}, for example).

\begin{Lem}\label{nullideal and intvalpolynomials}
In the above notation, let $k\in\N$ and $f(x)=g(x)/\pi^k \in K[x]$, for some $g\in V[x]$. Then $f(x)$ is in $\Int_K(M_n(V))$ if and only if $g(x)$ mod $\pi^k$ is in $N_{V/\pi^k V}(M_n(V/\pi^k V))$.
\end{Lem}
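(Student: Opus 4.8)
The plan is to rewrite the integer-valued condition as an integrality (divisibility) condition inside $M_n(V)$, and then to push that condition through the reduction homomorphism modulo $\pi^k$. First, I would note that for $M \in M_n(V)$ one has $f(M) = g(M)/\pi^k$, and since multiplication by $\pi^k$ is an injection $M_n(V) \to M_n(V)$ with image $\pi^k M_n(V)$, the containment $f(M) \in M_n(V)$ is equivalent to $g(M) \in \pi^k M_n(V)$. Therefore $f \in \Int_K(M_n(V))$ if and only if $g(M) \in \pi^k M_n(V)$ for every $M \in M_n(V)$.

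Next, let $\overline{\phantom{x}} \colon M_n(V) \to M_n(V/\pi^k V)$ denote the ring homomorphism induced entrywise by the quotient map $V \to V/\pi^k V$. This map is surjective, and its kernel is exactly $\pi^k M_n(V) = M_n(\pi^k V)$. Because $\overline{\phantom{x}}$ is a ring homomorphism that restricts on scalar matrices to coefficient reduction $V \to V/\pi^k V$, evaluation of polynomials is compatible with it: for $g \in V[x]$ and $M \in M_n(V)$ we have $\overline{g(M)} = \overline{g}(\overline{M})$, where $\overline{g} \in (V/\pi^k V)[x]$ is $g$ with coefficients reduced mod $\pi^k$. Hence $g(M) \in \pi^k M_n(V)$ if and only if $\overline{g}(\overline{M}) = 0$ in $M_n(V/\pi^k V)$.

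Combining the two steps, $f \in \Int_K(M_n(V))$ if and only if $\overline{g}(\overline{M}) = 0$ for all $M \in M_n(V)$; and since $\overline{\phantom{x}}$ is surjective, the matrices $\overline{M}$ range over all of $M_n(V/\pi^k V)$, so this is equivalent to $\overline{g} \in N_{V/\pi^k V}(M_n(V/\pi^k V))$, as claimed. The only point needing any care is the compatibility of polynomial evaluation at matrices with the reduction map, and this is immediate from functoriality of $M_n(-)$ together with the fact that a ring homomorphism commutes with the formation of $g(M)$; there is no substantive obstacle, as the lemma is in essence a bookkeeping translation between the two languages.
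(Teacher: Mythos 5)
Your proposal is correct and follows essentially the same route as the paper's own proof: translate $f(M)\in M_n(V)$ into $g(M)\in \pi^k M_n(V)$ and then reduce modulo $\pi^k$, using compatibility of polynomial evaluation with the entrywise reduction homomorphism. The paper's version is just a terser statement of the same two steps, so no further comment is needed.
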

\begin{proof}
The polynomial $f(x)$ is integer-valued over $M_n(V)$ if and only if $g(x)$ maps every matrix in $M_n(V)$ to the ideal $\pi^k M_n(V)=M_n(\pi^k V)$. Considering everything modulo $\pi^kV$, we get the stated result, using the fact that $M_n(\pi^kV)\cap V=\pi^k V$.
\end{proof}

Hence, null ideals can give us information about rings of integer-valued polynomials. We are interested in describing generators for the null ideal of $M_n(V/\pi^k V)$ in $V/\pi^k V$. The following polynomials will be crucial in our treatment.

\begin{Not}\label{Phi notation}
For each $n \geq 1$ and each prime power $q$, we define 
\begin{equation*}
\Phi_{q, n}(x) = (x^{q^n} - x)(x^{q^{n-1}} - x) \cdots (x^q - x).
\end{equation*}
With a slight abuse of notation, we will use $\Phi_{q, n}(x)$ to denote the same polynomial over any of the residue rings $V/\pi^k V$, $k\in\N$. The coefficient ring of the polynomial will be clear from the context.  
\end{Not}

Our goal for most of the rest of this section is to prove the next theorem.

\begin{Thm}\label{Phi thm}
Let $n \geq 1$ and let $1 \leq k \leq q$. Then, 
\begin{equation*}
N_{V/\pi^k V}(M_n(V/\pi^k V)) = (\Phi_{q, n}(x), \pi)^k = (\Phi_{q, n}(x)^k, \pi \Phi_{q, n}(x)^{k-1}, \ldots, \pi^{k-1} \Phi_{q, n}(x)). 
\end{equation*}
\end{Thm}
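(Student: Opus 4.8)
The plan is to prove both inclusions, with the nontrivial content being $N := N_{V/\pi^k V}(M_n(V/\pi^k V)) \subseteq (\Phi_{q,n}(x),\pi)^k$. First I would dispose of the easy inclusion. To see that $(\Phi_{q,n}(x),\pi)^k \subseteq N$, it suffices (since $N$ is an ideal) to check that each generator $\pi^j \Phi_{q,n}(x)^{k-j}$ kills every matrix $A \in M_n(V/\pi^k V)$. The key fact is that $\Phi_{q,n}(x)$ evaluated at any $A \in M_n(\F_q)$ is the zero matrix — this is the classical statement that $x^{q^i}-x$ annihilates every element of $\F_{q^i}$, hence annihilates any matrix whose minimal polynomial splits into irreducible factors of degree dividing $i$; taking the product over $i=1,\dots,n$ handles all of $M_n(\F_q)$ since an $n\times n$ matrix has minimal polynomial of degree at most $n$. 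Lifting back to $V/\pi^k V$, this says $\Phi_{q,n}(A) \in \pi M_n(V/\pi^k V)$ for every $A$, so $\Phi_{q,n}(A)^{k-j}\pi^j \in \pi^{k-j}\pi^j M_n(V/\pi^k V) = \pi^k M_n(V/\pi^k V) = 0$. That settles one direction; note this part does not need the hypothesis $k \le q$.

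For the reverse inclusion, let $g \in V[x]$ with $g(A) \equiv 0 \pmod{\pi^k}$ for all $A \in M_n(V/\pi^k V)$; I want to show $g \bmod \pi^k$ lies in $(\Phi_{q,n}(x),\pi)^k$. The natural strategy is induction on $k$. The base case $k=1$ reduces to describing $N_{\F_q}(M_n(\F_q))$, i.e. the ideal of polynomials in $\F_q[x]$ vanishing on every $n\times n$ matrix over $\F_q$; this is exactly $(\Phi_{q,n}(x))$, a generalization of Dickson's theorem alluded to in the introduction, and presumably is (or can be) proven by a direct degree/divisibility argument: a polynomial vanishes on all of $M_n(\F_q)$ iff it is divisible by the lcm of the minimal polynomials of all such matrices, and that lcm is $\prod_{d\mid \text{something},\, d\le n}(\text{product of monic irreducibles of degree } d) = \prod_{i=1}^n(x^{q^i}-x)/(\text{overlaps})$, which rearranges to $\Phi_{q,n}(x)$ up to the telescoping. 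For the inductive step, given $g$ annihilating $M_n(V/\pi^k V)$, first reduce mod $\pi$: then $\bar g$ annihilates $M_n(\F_q)$, so $\bar g = \Phi_{q,n}\bar h$ in $\F_q[x]$, i.e. $g = \Phi_{q,n} h + \pi g_1$ for some $h, g_1 \in V[x]$. Substituting into $g(A)\equiv 0 \pmod{\pi^k}$ and using that $\Phi_{q,n}(A) \in \pi M_n$, one gets $\pi\big(\Phi_{q,n}(A)/\pi\cdot h(A) + g_1(A)\big) \in \pi^k M_n$ — here is where I would need to be careful — so $\Phi_{q,n}(A)h(A)/\pi + g_1(A) \in \pi^{k-1}M_n$, and I would want to massage this into a congruence forcing $g_1$ (or a suitable combination) into $N_{V/\pi^{k-1}V}(M_n(V/\pi^{k-1}V))$, then apply the inductive hypothesis and reassemble to land in $(\Phi_{q,n},\pi)^{k-1}$, hence $g \in \Phi_{q,n}(\Phi_{q,n},\pi)^{k-1} + \pi(\Phi_{q,n},\pi)^{k-1} \subseteq (\Phi_{q,n},\pi)^k$.

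The main obstacle I anticipate is exactly this inductive descent: controlling the quotient $\Phi_{q,n}(A)/\pi$ as $A$ ranges over all matrices, and in particular justifying that the leftover polynomial genuinely satisfies the null-ideal condition one level down. The subtlety is that $\Phi_{q,n}(A)/\pi$ is not the evaluation of a fixed polynomial — it depends on $A$ — so I cannot naively write $g_1$ alone as an element of the smaller null ideal; I expect one must instead argue that $\Phi_{q,n}(x)/\pi$ is, modulo the relevant power of $\pi$, represented by a genuine polynomial identity valid on all of $M_n$ (perhaps because $\Phi_{q,n}$ can be written as $\pi\cdot(\text{something})$ plus a multiple of a higher power, exploiting properties of $x^{q^i}-x$ over $V/\pi^k$), or else one works with the universal polynomial identities directly. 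This is almost certainly where the hypothesis $k \le q$ is essential — it presumably guarantees that no "extra" lower-degree relations appear in $N_{V/\pi^k V}(M_n(V/\pi^k V))$ beyond the expected powers of $\Phi_{q,n}$ and $\pi$, a phenomenon that fails once $k$ exceeds $q$ (mirroring Dickson's original restriction $k \le p$). Once the descent is set up correctly, the bookkeeping to convert the inductive conclusion into membership in $(\Phi_{q,n},\pi)^k$ should be routine, and the displayed equality of $(\Phi_{q,n}(x),\pi)^k$ with the explicit list of generators $\pi^j\Phi_{q,n}(x)^{k-j}$ is just the binomial expansion of the $k$-th power of a $2$-generated ideal.
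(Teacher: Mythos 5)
Your first inclusion, $(\Phi_{q,n},\pi)^k\subseteq N_{V/\pi^kV}(M_n(V/\pi^kV))$, is correct and is essentially what the paper does (reduce mod $\pi$, use that $\Phi_{q,n}$ generates the null ideal of $M_n(\F_q)$, conclude $\Phi_{q,n}(A)\in\pi M_n(V/\pi^kV)$ and multiply out); you are also right that this direction does not need $k\le q$. The problem is the reverse inclusion, where your argument has a genuine gap that you yourself flag but do not close. Writing $g=\Phi_{q,n}h+\pi g_1$ and evaluating at $A$ gives $\pi\bigl(B_A h(A)+g_1(A)\bigr)\equiv 0 \pmod{\pi^k}$, where $B_A$ is a matrix with $\Phi_{q,n}(A)=\pi B_A$; this yields $B_Ah(A)+g_1(A)\equiv 0\pmod{\pi^{k-1}}$, which does \emph{not} put $g_1$ (or any fixed polynomial) into the null ideal one level down, precisely because $B_A$ varies with $A$ and is not the value of a polynomial. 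Your proposal ends with ``I expect one must instead argue that \dots'' and ``this is almost certainly where $k\le q$ is essential --- it presumably guarantees \dots''; neither the descent nor the role of the hypothesis $k\le q$ is ever actually established, and the paper's example of a polynomial of degree $(q+1)\deg\Phi_{q,n}-q$ lying in $N_{q+1}$ shows the statement genuinely fails at $k=q+1$, so any proof must use $k\le q$ at a concrete point.

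The paper takes a different route that avoids your induction entirely. It invokes a structure theorem (from the second author's work on polynomials killing finite rings) saying that $N_k=(\phi_k,\pi\phi_{k-1},\dots,\pi^{k-1}\phi_1)$ where $\phi_j$ is a monic polynomial of minimal degree in $N_j$; since $\phi_j$ is an lcm of all monic degree-$n$ polynomials over $V/\pi^jV$ and $\Phi_{q,n}^j$ is a common multiple, the whole theorem reduces to the degree bound $\deg\phi_k\ge k\deg\Phi_{q,n}$. That bound is proved by splitting into $\iota$-primary pieces and then, for each monic irreducible $\iota$ of degree $d$, evaluating a putative lcm at the companion matrices of $\iota(x)^{\lfloor n/d\rfloor}-\pi a_j$ for $k$ \emph{distinct} elements $a_1,\dots,a_k\in\F_q$; successive division produces Vandermonde-type factors $\prod_{j'<j}(a_j-a_{j'})$ that must be units, which forces the lcm to have degree at least $k\cdot d\lfloor n/d\rfloor$. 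The requirement that $k$ distinct residues exist in $\F_q$ is exactly where $k\le q$ enters. If you want to salvage your approach, you need either this lcm/degree mechanism or some other concrete device that replaces the ill-defined quotient $\Phi_{q,n}(A)/\pi$; as written, the inductive step does not go through.
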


Using different terminology, this theorem was proven for $k=1$ in \cite[Thm. 3]{BraCarLev}; we will revisit that result below in Theorem \ref{BCL Theorem}. When $n = 1$, we have $\Phi_{q, 1}(x) = x^q-x$, and Theorem \ref{Phi thm} is the assertion that $N_{V/\pi^k V}(V/\pi^kV) = (x^q-x, \pi)^k$ for $1 \leq k \leq q$. If, in addition, $V$ is a localization of $\Z$, then this is actually a classical result which can be found in the book of Dickson \cite[Thm.\ 27, p.\ 22]{Dickson}. An alternate modern treatment, which examines the pullback to $\Z[x]$ of the null ideal $N_{\Z/p^k\Z}(\Z/p^k\Z)$, is given in \cite[Thm.\ 3.1]{PerPrimDec}.

The proof of Theorem \ref{Phi thm} is complicated, and involves several stages and preliminary results. We will need to work with different sets of polynomial, common multiples, and least common multiples across the different residue rings $V/\pi^k V$. To help simplify the necessary notation, we adopt the following conventions (the need for all this notation will become apparent as we work through the proof).

\begin{DefNot}\label{phi def}\mbox{}
\begin{itemize}
\item For each $k \geq 1$ let $V_k = V / \pi^k V$ and $N_k = N_{V_k}(M_n(V_k))$. Note that $V_1 = \F_q$.
\item Since $n$ and $q$ will be fixed, let $\Phi = \Phi_{q, n}$.
\item For each $k \geq 1$, let $\phi_k(x)$ be a monic polynomial of minimal degree in $N_k$.
\item For each $k \geq 1$ and each $d \geq 1$, let $\mcP_d(V_k)$ denote the set of monic polynomials of degree $d$ in $V_k[x]$.
\item Let $\mcP_{\leq n}^{{\rm irr}}(\F_q)=\mcP_{\leq n}^{{\rm irr}}$ denote the set of monic irreducible polynomials in $\F_q[x]$ of degree at most $n$.
\item For each $k \geq 1$, $f \in V_k[x]$, and $\iota \in \mcP_{\leq n}^{{\rm irr}}$, we say that $f$ is $\iota$-primary if $f$ is monic and the residue of $f$ in $\F_q[x]$ is a positive power of $\iota$.
\item For each $k \geq 1$, each $d \geq 1$, and each $\iota \in \mcP_{\leq n}^{{\rm irr}}$, let $\mcP^\iota_d(V_k)$ denote the set of $\iota$-primary polynomials in $V_k[x]$ of degree $d$.
\item For each $k \geq 1$, each $d \geq 1$, and each $\iota \in \mcP_{\leq n}^{{\rm irr}}$, let $L_d^\iota(V_k)$ be a monic least common multiple (lcm) for the polynomials in $\mcP^\iota_d(V_k)$. That is, $L_d^\iota(V_k)$ is a monic polynomial in $V_k[x]$ of least degree such that each $f \in \mcP^\iota_d(V_k)$ divides $L_d^\iota(V_k)$. An lcm need not be unique but the degree of an lcm is uniquely determined (see the discussion in \cite{Wer}).
\end{itemize}
\end{DefNot}

In \cite{Fri2}, Frisch described some general properties of null ideals and matrices that we will find very useful.

\begin{Lem}\label{FrischLemma}
\mbox{}
\begin{enumerate}[(1)]
 \item \cite[Lem. 3.3]{Fri2} Let $R$ be a commutative ring, $f\in R[x]$ a monic polynomial and $C\in M_n(R)$ the companion matrix of $f$. Then $N_R(C)=f(x)R[x]$.
\item \cite[Lem. 3.4]{Fri2} Let $D$ be a domain and $f(x) = g(x)/c$, $g\in D[x]$, $c\in D \setminus\{0\}$. Then
$f\in\Int_K(M_n(D))$ if and only if $g$ is divisible modulo $cD[x]$ by all monic polynomials in $D[x]$ of degree $n$.
\end{enumerate}
\end{Lem}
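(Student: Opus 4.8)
The statement has two parts, both drawn from \cite{Fri2}; here is how each can be established. The common tool is the observation that for a monic $f \in R[x]$ of degree $n$ with companion matrix $C \in M_n(R)$, the powers $I, C, \dots, C^{n-1}$ are $R$-linearly independent in $M_n(R)$: applying the linear combination $r_0 I + r_1 C + \dots + r_{n-1}C^{n-1}$ to the first standard basis vector $e_1$ produces $r_0 e_1 + r_1 e_2 + \dots + r_{n-1} e_n$ (using $C^i e_1 = e_{i+1}$ for $0 \le i \le n-1$), which vanishes only when every $r_i = 0$. So I would first record this fact and then treat the two parts in turn.

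For part (1): a direct computation with the companion matrix shows $f(C) e_1 = 0$, and since $f(C)$ commutes with $C$ we get $f(C) e_j = C^{j-1} f(C) e_1 = 0$ for all $j$, hence $f(C) = 0$ (alternatively, invoke Cayley--Hamilton, valid over any commutative ring). This gives $f(x)R[x] \subseteq N_R(C)$. Conversely, if $g \in N_R(C)$, divide by the monic $f$ to write $g = qf + r$ with $q, r \in R[x]$ and $\deg r < n$. Then $r(C) = g(C) - q(C)f(C) = 0$, and by the linear independence above every coefficient of $r$ vanishes, so $f \mid g$. Thus $N_R(C) = f(x)R[x]$.

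For part (2): suppose first that $g$ is divisible modulo $cD[x]$ by every monic degree-$n$ polynomial in $D[x]$; fix $A \in M_n(D)$ and apply this to its characteristic polynomial $m = \chi_A$, which is monic of degree $n$ over $D$. Writing $g = hm + c\ell$ with $h, \ell \in D[x]$ and evaluating at $A$, Cayley--Hamilton gives $m(A) = 0$, so $g(A) = c\,\ell(A)$ and hence $f(A) = \ell(A) \in M_n(D)$; as $A$ was arbitrary, $f \in \Int_K(M_n(D))$. Conversely, assume $f = g/c \in \Int_K(M_n(D))$ and let $m \in D[x]$ be monic of degree $n$ with companion matrix $C \in M_n(D)$. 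Then $f(C) = g(C)/c \in M_n(D)$, i.e.\ $g(C) \in cM_n(D)$. Divide $g$ by the monic $m$ as $g = qm + r$ with $q, r \in D[x]$ and $\deg r < n$; by part (1), $m(C) = 0$, so $r(C) = g(C) \in cM_n(D)$. Reading off the first column of $r(C)$, namely $r(C)e_1 = r_0 e_1 + \dots + r_{n-1}e_n \in cD^n$, shows each coefficient of $r$ lies in $cD$, so $r \in cD[x]$ and $g \equiv qm \pmod{cD[x]}$, which is the asserted divisibility.

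I do not expect a serious obstacle here: the points needing care are that division by a monic polynomial keeps quotient and remainder inside $D[x]$ (resp.\ $R[x]$) — which is exactly what reduces everything to the degree-$<n$ case handled by the linear-independence lemma — and that evaluation at a fixed matrix is a ring homomorphism $D[x] \to M_n(D)$, so it carries the ideal $cD[x]$ into $cM_n(D)$. The one genuinely essential external input is Cayley--Hamilton over a commutative ring, which is what lets us pass from an arbitrary matrix in $M_n(D)$ to a monic degree-$n$ annihilator and so reduce part (2) to the companion-matrix case.
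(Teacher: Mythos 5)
Your proof is correct. The paper itself does not prove this lemma---it is quoted from Frisch \cite[Lem.\ 3.3, 3.4]{Fri2}---and your argument (linear independence of $I, C, \dots, C^{n-1}$ via the action on $e_1$, division with remainder by a monic polynomial, and Cayley--Hamilton to pass from an arbitrary matrix to a monic degree-$n$ annihilator) is essentially the standard one used in that reference, so nothing further is needed.
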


Specializing to our situation, we easily obtain the following corollary.

\begin{Cor}\label{Frisch corollary}
Let $k \geq 1$.
\begin{enumerate}[(1)]
\item Let $f \in V_k[x]$, let $m \in \mcP_n(V_k)$, and let $C \in M_n(V_k)$ be the companion matrix for $m$. Then, $m$ divides $f$ if and only if $f(C) = 0$. 
\item Let $f \in V_k[x]$. Then, $f \in N_k$ if and only if $f$ is divisible by every polynomial in $\mcP_n(V_k)$.
\item The polynomial $\phi_k$ is an lcm for $\mcP_n(V_k)$.
\end{enumerate}
\end{Cor}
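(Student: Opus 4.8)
The plan is to obtain all three parts as direct specializations of Lemma~\ref{FrischLemma}, keeping in mind that the residue rings $V_k = V/\pi^k V$ are commutative but (for $k \geq 2$) not domains, so only the arbitrary-commutative-ring version of Frisch's first lemma is available; that turns out to be all that is needed.

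For part~(1), I would apply Lemma~\ref{FrischLemma}(1) with $R = V_k$ to the monic polynomial $m \in \mcP_n(V_k)$ and its companion matrix $C$: this gives $N_{V_k}(C) = m(x)V_k[x]$, so $f(C) = 0$ if and only if $f \in m(x)V_k[x]$, i.e.\ if and only if $m \mid f$ in $V_k[x]$ (divisibility by the monic polynomial $m$ being unambiguous, since the division algorithm still works in $V_k[x]$). For part~(2), write $N_k = \bigcap_{C \in M_n(V_k)} N_{V_k}(C)$. If $f \in N_k$, then for each $m \in \mcP_n(V_k)$ one takes $C$ to be the companion matrix of $m$ and deduces $m \mid f$ from part~(1). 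Conversely, suppose $f$ is divisible by every polynomial in $\mcP_n(V_k)$ and let $C \in M_n(V_k)$ be arbitrary. Its characteristic polynomial $\chi_C$ is monic of degree $n$ over $V_k$, so $\chi_C \in \mcP_n(V_k)$ and hence $\chi_C \mid f$, say $f = \chi_C h$ with $h \in V_k[x]$. Evaluating at $C$ and using the Cayley--Hamilton identity $\chi_C(C) = 0$ (valid over any commutative ring) yields $f(C) = \chi_C(C)\,h(C) = 0$, so $f \in N_k$. (An alternative is to lift $f$ to $V[x]$ and combine Lemma~\ref{FrischLemma}(2) with $D = V$, $c = \pi^k$, together with Lemma~\ref{nullideal and intvalpolynomials}; the Cayley--Hamilton route is shorter and self-contained.)

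Part~(3) is then formal. First, $\prod_{m \in \mcP_n(V_k)} m(x)$ is a monic polynomial in $N_k$ by part~(2) — a finite product since $V_k$ is finite — so a monic polynomial $\phi_k$ of minimal degree in $N_k$ exists. By part~(2), membership in $N_k$ is exactly the property of being divisible by every polynomial in $\mcP_n(V_k)$; hence $\phi_k$ is a monic common multiple of $\mcP_n(V_k)$, and any monic common multiple of $\mcP_n(V_k)$ lies in $N_k$ and therefore has degree at least $\deg \phi_k$. Thus $\phi_k$ has least degree among monic common multiples, i.e.\ it is an lcm for $\mcP_n(V_k)$. The only step that requires more than bookkeeping is the converse in part~(2), where one must bridge from divisibility by the finitely many companion-matrix characteristic polynomials to vanishing on all of $M_n(V_k)$; the Cayley--Hamilton identity is precisely what closes that gap.
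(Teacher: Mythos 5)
Your proposal is correct. Parts (1) and (3) are handled exactly as in the paper: part (1) is a direct restatement of Lemma~\ref{FrischLemma}(1) over $R = V_k$, and part (3) follows formally from part (2) together with the minimality of $\deg \phi_k$ (your extra remark that $\prod_{m \in \mcP_n(V_k)} m$ witnesses the existence of $\phi_k$ is a nice touch the paper leaves implicit). The one genuine divergence is the converse direction of part (2): the paper obtains it by lifting $f$ to $V[x]$ and combining Lemma~\ref{nullideal and intvalpolynomials} with Frisch's Lemma~\ref{FrischLemma}(2), which is stated for a domain $D$ and the ring $\Int_K(M_n(D))$, whereas you argue directly over the finite ring $V_k$ via Cayley--Hamilton: every $C \in M_n(V_k)$ is annihilated by its characteristic polynomial, which lies in $\mcP_n(V_k)$ and hence divides $f$, so $f(C)=0$. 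Both are valid; the paper's route is a one-line citation, while yours is self-contained, avoids the detour through integer-valued polynomials on $M_n(V)$, and makes transparent exactly why divisibility by the monic degree-$n$ polynomials controls vanishing on all of $M_n(V_k)$ --- which is, in effect, a reproof of Frisch's Lemma 3.4 in the case at hand.
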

\begin{proof}
Part (1) is a restatement of Lemma \ref{FrischLemma} (1). Part (2) follows from Lemmas \ref{nullideal and intvalpolynomials} and \ref{FrischLemma} (2). Finally, for (3), $\phi_k$ is monic by assumption, and is a common multiple for $\mcP_n(V_k)$ because $\phi \in N_k$. But, the minimality of $\deg \phi_k$ means that $\phi_k$ is in fact an lcm for $\mcP_n(V_k)$.
\end{proof}

Thus, we have established a connection between null ideals and least common multiples of the sets $\mcP_n(V_k)$. If we focus on the case $k = 1$, then everything is taking place over the field $\F_q$. In this situation, the aforementioned theorem \cite[Thm.\ 3]{BraCarLev} brings us back to the polynomial $\Phi = \Phi_{q,n}$. 

\begin{Thm}\label{BCL Theorem}\emph{(\cite[Thm.\ 3 \& eq.\ (3.3)]{BraCarLev})}
Let $n \geq 1$ and let $q$ be a prime power. Let $\Phi_{q,n}$ be as in Notation \ref{Phi notation}. 
\begin{enumerate}[(1)]
\item $N_{\F_q}(M_n(\F_q))$ is generated by $\Phi_{q, n}$. 
\item $\Phi_{q, n}$ is the (unique) lcm for $\mcP_n(\F_q)$.
\item The factorization of $\Phi_{q,n}$ into irreducible polynomials is
\begin{equation*}
\Phi_{q,n} = \prod_{\iota \in \mcP_{\leq n}^{{\rm irr}}} \iota^{\lfloor n/\deg \iota \rfloor}.
\end{equation*}
\end{enumerate}
\end{Thm}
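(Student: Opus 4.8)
The plan is to reduce everything to the classical factorization of $x^{q^d}-x$ in $\F_q[x]$ and to the description of $N_{\F_q}(M_n(\F_q))$ already obtained in Corollary \ref{Frisch corollary}. By part (2) of that corollary, applied with $k=1$, a polynomial $f\in\F_q[x]$ lies in $N_{\F_q}(M_n(\F_q))$ precisely when $f$ is divisible by every element of $\mcP_n(\F_q)$; since $\F_q[x]$ is a principal ideal domain, the set of such common multiples is the principal ideal generated by a monic lcm of $\mcP_n(\F_q)$. Thus (1) and (2) are two phrasings of the same statement, and once we show that the monic lcm of $\mcP_n(\F_q)$ equals $\Phi_{q,n}$ we are done with both; the uniqueness asserted in (2) is automatic once the lcm is normalized to be monic.

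To identify this lcm I would first record the well-known identity $x^{q^d}-x=\prod\iota$, the product ranging over the monic irreducible polynomials of $\F_q[x]$ whose degree divides $d$, each appearing with multiplicity one. (Indeed $x^{q^d}-x$ is separable and its roots in $\overline{\F_q}$ are exactly the elements of $\F_{q^d}$, which in turn are exactly the roots of the monic irreducibles of degree dividing $d$.) Multiplying these identities for $d=1,\dots,n$ and observing that a monic irreducible $\iota$ of degree $e$ divides $x^{q^d}-x$ for exactly $\lfloor n/e\rfloor$ values of $d$ in $\{1,\dots,n\}$, we obtain
\begin{equation*}
\Phi_{q,n}=\prod_{d=1}^{n}(x^{q^d}-x)=\prod_{\iota\in\mcP_{\leq n}^{{\rm irr}}}\iota^{\lfloor n/\deg\iota\rfloor},
\end{equation*}
since the exponent vanishes as soon as $\deg\iota>n$. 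This is assertion (3).

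It then remains to check that the right-hand side above is the monic lcm of $\mcP_n(\F_q)$. Because $\F_q[x]$ is a unique factorization domain, this lcm is $\prod_{\iota}\iota^{m_\iota}$ with $m_\iota=\max\{v_\iota(f)\mid f\in\mcP_n(\F_q)\}$, where $v_\iota$ denotes the $\iota$-adic valuation. Fix a monic irreducible $\iota$ of degree $e$. If $e>n$, then no monic polynomial of degree $n$ is divisible by $\iota$, so $m_\iota=0$. If $e\le n$, then a degree comparison forces $v_\iota(f)\le\lfloor n/e\rfloor$ for every $f\in\mcP_n(\F_q)$, while $f=\iota^{\lfloor n/e\rfloor}\,x^{\,n-e\lfloor n/e\rfloor}$ is a monic polynomial of degree exactly $n$ whose cofactor has degree less than $e$ and is therefore coprime to $\iota$, so $v_\iota(f)=\lfloor n/e\rfloor$; hence $m_\iota=\lfloor n/\deg\iota\rfloor$. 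Comparing with the displayed factorization, the monic lcm of $\mcP_n(\F_q)$ is $\Phi_{q,n}$, which proves (2), and then (1) follows as explained in the first paragraph.

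I do not expect a genuine obstacle: the theorem merely packages a few standard facts about finite fields and unique factorization. The one point deserving a little care is the valuation bookkeeping in the last paragraph, where one must simultaneously argue that $\lfloor n/\deg\iota\rfloor$ bounds $v_\iota$ from above on all of $\mcP_n(\F_q)$ and exhibit a single monic polynomial of degree exactly $n$ that attains this bound with a cofactor coprime to $\iota$.
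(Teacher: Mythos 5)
Your argument is correct. Note, however, that the paper does not prove this statement at all: it is quoted verbatim from Brawley--Carlitz--Levine \cite[Thm.\ 3 \& eq.\ (3.3)]{BraCarLev}, so what you have supplied is a self-contained proof of a cited result rather than an alternative to an argument in the text. Your route is the natural one and meshes well with the paper's framework: Corollary \ref{Frisch corollary}(2) with $k=1$ (whose proof is independent of Theorem \ref{BCL Theorem}, so there is no circularity) reduces (1) to (2), the identity $x^{q^d}-x=\prod_{\deg\iota\mid d}\iota$ gives (3) by counting multiples of $\deg\iota$ in $\{1,\dots,n\}$, and the $\iota$-adic valuation computation identifies $\Phi_{q,n}$ as the lcm of $\mcP_n(\F_q)$. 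The one point worth a second glance is your witness $\iota^{\lfloor n/e\rfloor}x^{\,n-e\lfloor n/e\rfloor}$ attaining the bound $v_\iota=\lfloor n/e\rfloor$: when $e=1$ the exponent $n-e\lfloor n/e\rfloor$ is $0$, so the cofactor is $1$ and the coprimality claim is vacuously fine, and when $e\geq 2$ the cofactor $x^{r}$ with $0\leq r<e$ cannot be divisible by $\iota$; so the check goes through in all cases, including $\iota=x$. This is essentially the original Brawley--Carlitz--Levine argument, and including it would make the paper's Section \ref{Null ideals} self-contained at little cost.
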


Finally, we have all the necessary tools and can proceed with the proof of Theorem \ref{Phi thm}. We break the proof up into three Claims. The first claim shows that it suffices to compare the degrees of the polynomials $\phi_k$ and $\Phi^k$.\\

\noindent\textbf{Claim 1}: To prove Theorem \ref{Phi thm}, it suffices to show that $\deg(\phi_k) \geq \deg(\Phi^k)$ for all $1 \leq k \leq q$.
\begin{proof}
Fix $k$ between $1$ and $q$. Since over $\F_q$ we have $N_1 = (\Phi)$ by Theorem \ref{BCL Theorem} (1), over $V_k$ we have $\Phi^k \in N_k$, and by \cite[Thm. 5.4]{Wer2}, the ideal $N_k$ is equal to $(\phi_k$, $\pi \phi_{k-1}$, $\pi^2 \phi_{k-2}$, $\ldots$, $\pi^{k-1} \phi_1)$. So, to prove Theorem \ref{Phi thm}, it will be enough to show that we can take $\phi_k = \Phi^k$, and doing so is valid if $\deg(\phi_k) = \deg(\Phi^k)$.

Now, by Corollary \ref{Frisch corollary} part (3), $\phi_k$ is an lcm for $\mcP_n(V_k)$. We will show that $\Phi^k$ is a common multiple for $\mcP_n(V_k)$, i.e.\ that each $f\in \mcP_n(V_k)$ divides $\Phi^k$. To do this, let $f \in \mcP_n(V_k)$ and let $C \in M_n(V_k)$ be the companion matrix for $f$. 

Recall that we have a canonical projection map from $M_n(V_k)$ to $M_n(\F_q)$, whose kernel is $\pi M_n(V_k)=M_n(\pi V_k)$. Over the residue field $\F_q$, by Theorem \ref{BCL Theorem} the polynomial $\Phi$ is zero on the matrix obtained by reducing the entries of $C$ modulo $\pi$. It follows that over $V_k$ we have $\Phi(C)\in M_n(\pi V_k)$. Hence, $\Phi(C)^k = 0$ in $M_n(V_k)$. 

By Corollary \ref{Frisch corollary} (1), $f$ divides $\Phi^k$, and since $f$ was arbitrary, we conclude that $\Phi^k$ is a common multiple for $\mcP_n(V_k)$. Since $\phi_k$ is an lcm for $\mcP_n(V_k)$, we have $\deg(\phi_k) \leq \deg(\Phi^k)$.

Thus, to complete the proof, it suffices to show that $\deg(\phi_k) \geq \deg(\Phi^k)$.
\end{proof}

Next, we argue that it is enough just to focus our attention on $\iota$-primary polynomials.\\

\noindent\textbf{Claim 2}: To prove Theorem \ref{Phi thm}, it suffices to show that for all $1 \leq k \leq q$ and all $\iota \in \mcP_{\leq n}^{{\rm irr}}$, we have $\deg(L_D^\iota(V_k)) \geq kD$, where $D = \deg(\iota) \lfloor \frac{n}{\deg(\iota)}\rfloor$.
\begin{proof}
Let $D=\deg(\iota)\lfloor \frac{n}{\deg(\iota)}\rfloor$. By Theorem \ref{BCL Theorem} (3), we have 
\begin{equation}\label{1st phi eq}
\Phi^k = \prod_{\iota \in \mcP_{\leq n}^{{\rm irr}}} \iota^{k\lfloor n/\deg \iota \rfloor}.
\end{equation} 
Moreover, by \cite[Thm.\ 5.1]{Wer} we know that the polynomial $\prod_{\iota \in \mcP_{\leq n}^{{\rm irr}}} L_D^\iota(V_k)$ is an lcm for $\mcP_n(V_k)$. Thus, we can take
\begin{equation}\label{2nd phi eq}
\phi_k = \prod_{\iota \in \mcP_{\leq n}^{{\rm irr}}} L_D^\iota(V_k).
\end{equation}
Comparing \eqref{1st phi eq} and \eqref{2nd phi eq} gives us a method of attack: we can prove that $\deg(\phi_k) \geq \deg(\Phi^k)$ by showing that for each $\iota$, we have 
\begin{equation}\label{3rd phi eq}
\deg(L_D^\iota(V_k)) \geq \deg(\iota^{k\lfloor n/\deg \iota \rfloor}) = k\deg(\iota)\lfloor \tfrac{n}{\deg(\iota)}\rfloor=kD.  \qedhere
\end{equation}
\end{proof}

To complete the proof of Theorem \ref{Phi thm}, all that remains is to justify the inequality (\ref{3rd phi eq}) from the previous claim.\\

\noindent\textbf{Claim 3}: Let $1 \leq k \leq q$ and let $\iota \in \mcP_{\leq n}^{{\rm irr}}$. Let $D=\deg(\iota)\lfloor \frac{n}{\deg(\iota)}\rfloor$.  Then, $\deg(L_D^\iota(V_k)) \geq kD$.
\begin{proof}
For the final stage of the proof, $k$ and $\iota$ are fixed, so we can simplify the notation. Let $d = \deg \iota$, let $D = d \lfloor \tfrac{n}{d} \rfloor$, let $\mcP = \mcP^\iota_D(V_k)$, and let $f = L_D^\iota(V_k)$. We need to prove that $\deg f \geq kD$. Unless stated otherwise, calculations take place mod $\pi^k$.

Choose a $k$-element subset $\{a_1, \ldots, a_k\}$ from $\F_q$. For each $1 \leq j \leq k$, let $m_j(x) = (\iota(x))^{\lfloor n/d \rfloor} - \pi a_j \in \mcP$, and let $C_j \in M_D(V_k)$ be the $D \times D$ companion matrix for $m_j$. Then, for all $1 \leq j \leq k$, we have $m_j(C_j) = 0$ and $(\iota(C_j))^{\lfloor n/d \rfloor} = \pi a_j I$. This latter relation implies that $m_j(C_{j'}) = \pi(a_{j'} - a_j)I$ for all $1 \leq j, j' \leq k$.

Now, $m_1$ divides $f$ because $f$ is an lcm for $\mcP$. Since both $m_1$ and $f$ are monic, there exists a monic $f_1 \in V_k[x]$ such that $f = m_1 f_1$. If $k = 1$, then we are done, so assume that $k \geq 2$. In that case, $m_2$ also divides $f$, so
\begin{equation*}
0 = f(C_2) = m_1(C_2) f_1(C_2) = \pi (a_2 - a_1) f_1(C_2).
\end{equation*}
This equality occurs mod $\pi^k$, and $a_2 - a_1$ is a unit mod $\pi$ (hence is a unit mod $\pi^k$), so $f_1(C_2) \equiv 0$ mod $\pi^{k-1}$. Thus, $m_2$ divides $f_1$ mod $\pi^{k-1}$, so in $V_k[x]$ we may write $f_1 = m_2 f_2 + \pi^{k-1} g_1$, where $f_2, g_1 \in V_k[x]$, $f_2$ is monic, and $\deg g_1 < \deg f_1 = \deg f - D$.

At this point, we have
\begin{equation*}
f = m_1 f_1 = m_1 (m_2 f_2 + \pi^{k-1} g_1) = m_1 m_2 f_2 + \pi^{k-1} m_1 g_1.
\end{equation*}
If $k = 2$, we are done; if not, applying the same argument as above yields
\begin{align*}
0 &= f(C_3)\\
&= m_1(C_3) m_2(C_3) f_2(C_3) + \pi^{k-1} m_1(C_3) g_1(C_3)\\
&= \pi^2(a_3-a_1) (a_3-a_2) f_2(C_3) + 0.
\end{align*}
Since $(a_3-a_1) (a_3-a_2)$ is a unit mod $\pi^k$, we have $f_2(C_3) \equiv 0$ mod $\pi^{k-2}$. The same steps as before will give us
\begin{equation*}
f = m_1 m_2 m_3 f_3 + \pi^{k-1} m_1 g_1 + \pi^{k-2} m_1 m_2 g_2
\end{equation*}
where $f_3, g_2 \in V_k[x]$, $f_3$ is monic, and $\deg g_2 < \deg f_2 = \deg f - 2D$. 

Since $k \leq q$, the product $(a_k - a_1)(a_k - a_2) \cdots (a_k - a_{k-1})$ will always be a unit mod $\pi^k$. Thus, we can continue this process as long as necessary, ultimately resulting in the expansion
\begin{equation*}
f = m_1 m_2 \cdots m_k f_k + \pi g
\end{equation*}
where $f_k, g \in V_k[x]$, $f_k$ is monic, and $\deg g < \deg f$. Since each $m_j$ has degree $D$, we conclude that $\deg f \geq kD$, as required.
\end{proof}

\begin{Rem}
Theorem \ref{Phi thm} does not hold once $k > q$; we demonstrate this by example below. Examining the proof gives some indication why. The final stage of the proof relied on the fact that the product $(a_k - a_1)(a_k - a_2) \cdots (a_k - a_{k-1})$ is nonzero mod $\pi$, and this will not be true once $k > q$. Products of this form arise naturally with $P$-orderings \cite{Bha}, and illustrate once again the close connections between $P$-orderings and integer-valued polynomials.
\end{Rem}

\begin{Ex}
Theorem \ref{Phi thm} is false for $k=q+1$. Let
\begin{align*}
\theta(x) &= \Phi/\big(\prod_{a \in \F_q}(x-a)^n\big) = \prod_{\iota \in \mcP_{\leq n}^{{\rm irr}}, \deg \iota \geq 2} \!\! \iota(x)^{\lfloor n/ \deg \iota \rfloor},\\%
\ell(x) &= x^{n-1}\prod_{a \in \F_q} (x^n + \pi a),\\
L(x) &= \prod_{a \in \F_q} \ell(x-a), \text{ and }\\
\psi(x) &= L(x) \theta(x)^{q+1}
\end{align*}
(cf.\ Construction \ref{The construction}). We claim that $\psi \in N_{q+1}$. Let $C$ be the companion matrix for a polynomial $m \in \mcP_n(V_{q+1})$. If $m \not\equiv (x-a)^n $ mod $\pi$ for all $a \in \F_q$, then $\theta(C)\equiv0$ mod $\pi$, so $\psi(C) \equiv 0$ mod $\pi^{q+1}$. So, assume $m \equiv (x-a)^n$ mod $\pi$ for some $a \in \F_q$.

Assume first that $a = 0$, and consider $m$ mod $\pi^2$. There exists $b \in \F_q$ such that the constant term of $m$ is equivalent to $-\pi b$ mod $\pi^2$. Consequently, $C^n + \pi b I$ is divisible by $\pi C$ mod $\pi^2$. It follows that $C^{n-1}(C^n + \pi b I) \equiv 0$ mod $\pi^2$, and so $\ell(C) \equiv 0$ mod $\pi^{q+1}$. By translation, $L(C) \equiv 0$ mod $\pi^{q+1}$ regardless of the choice of $a$. We conclude that $\psi(C) \equiv 0$ mod $\pi^{q+1}$ for all companion matrices $C$. Thus, $\psi \in N_{q+1}$.

However, one may compute that $\deg \psi = (q+1)\deg \Phi - q$. Since $\deg \phi_{q+1} \leq \deg \psi < \deg (\Phi^{q+1})$, Theorem \ref{Phi thm} does not hold for $k = q+1$.
\end{Ex}

We close the paper by once again considering $\pi$-sequences and optimal polynomials (see the definitions given at the start of this section). By using Theorem \ref{Phi thm} , we can give a succinct formula for the initial terms of the $\pi$-sequence $\mu_d$.

\begin{Cor}\label{Formula for mu_d}
The $\pi$-sequence $\mu_d$ for $\Int_K(M_n(V))$ satisfies $\mu_d = \lfloor d/\deg \Phi_{q, n} \rfloor$ for $0 \leq d \leq q\cdot \deg \Phi_{q, n}$.
\end{Cor}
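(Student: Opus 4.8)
The plan is to combine Theorem \ref{Phi thm} with Lemma \ref{nullideal and intvalpolynomials} and the description of the null ideal $N_k$ as $(\Phi^k, \pi\Phi^{k-1}, \ldots, \pi^{k-1}\Phi)$. Write $e = \deg \Phi_{q,n}$ for brevity. Fix $d$ with $0 \leq d \leq qe$. By Lemma \ref{nullideal and intvalpolynomials}, the value $\mu_d$ is the largest $k$ for which there exists $g \in V[x]$ of degree $d$ whose reduction mod $\pi^k$ lies in $N_k = N_{V/\pi^k V}(M_n(V/\pi^k V))$; equivalently, $\mu_d$ is the largest $k$ such that $N_k$ contains a polynomial of degree exactly $d$ whose leading coefficient is a unit of $V/\pi^k V$ (taking the reduction of $g$; note that if $\pi \mid g$ we could divide out, so in lowest terms the leading coefficient is a unit). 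The key point is that we only ever need $k \leq q$, so Theorem \ref{Phi thm} applies throughout and gives us the explicit generating set for $N_k$.

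First I would prove the lower bound $\mu_d \geq \lfloor d/e \rfloor$. Set $k = \lfloor d/e \rfloor$; since $d \leq qe$ we have $k \leq q$, so Theorem \ref{Phi thm} is available. The polynomial $\Phi^k$ lies in $N_k$ and has degree $ke \leq d$; multiplying by a monic polynomial of degree $d - ke$ (for instance $x^{d-ke}$) keeps it in the ideal $N_k$ and raises the degree to exactly $d$, with leading coefficient $1$. Pulling this back to a polynomial $g \in V[x]$ of degree $d$ not divisible by $\pi$, Lemma \ref{nullideal and intvalpolynomials} shows $g(x)/\pi^k \in \Int_K(M_n(V))$, so $\mu_d \geq k = \lfloor d/e \rfloor$.

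For the upper bound I would argue by contradiction: suppose $\mu_d \geq k := \lfloor d/e \rfloor + 1$. Since $d \leq qe$ we still have $k \leq q$ (indeed $\lfloor d/e\rfloor \le q$, and if $\lfloor d/e \rfloor = q$ then $d = qe$, forcing $k = q+1$ — so I must handle the edge case $d = qe$ separately, or note that there $\mu_d = q$ is what we want and $k=q+1$; more care: actually when $d=qe$, $\lfloor d/e\rfloor = q$ and the claim is $\mu_d = q$, so assume $\mu_d \ge q+1$, but then I'd need $N_{q+1}$, which is not covered by Theorem \ref{Phi thm} — here I'd instead invoke that $N_{q+1} \subseteq (\phi_{q+1}, \pi\phi_q, \ldots)$ via \cite[Thm.\ 5.4]{Wer2} together with the degree estimate, or simply the fact that any element of $N_{q+1}$ of degree $d$ with unit leading coefficient would force, modulo $\pi$, a polynomial divisible by $\Phi$, hence of degree $\geq e$, and iterating gives degree $\geq (q+1)e > qe = d$, a contradiction). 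For the main range $k \leq q$: by Lemma \ref{nullideal and intvalpolynomials} there is $g \in V[x]$, $\deg g = d$, with $g \bmod \pi^k \in N_k = (\Phi^k, \pi\Phi^{k-1}, \ldots, \pi^{k-1}\Phi)$ and leading coefficient a unit. Writing $g \equiv \sum_{j=0}^{k-1} \pi^j \Phi^{k-j} h_j \pmod{\pi^k}$ with $h_j \in V_k[x]$, the $\pi$-adically leading contribution (the $j=0$ term $\Phi^k h_0$) must account for the unit leading coefficient and hence $h_0$ has a unit leading coefficient and is nonzero, so $\deg(\Phi^k h_0) \geq ke$; but this has to match $\deg g = d$, giving $d \geq ke = (\lfloor d/e\rfloor + 1)e > d$, the contradiction. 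I expect the main obstacle to be making this last degree bookkeeping rigorous: one has to be careful that cancellation among the terms $\pi^j \Phi^{k-j} h_j$ cannot lower the degree below $ke$, which follows because reducing mod $\pi$ kills all terms with $j \geq 1$ and leaves $\Phi^k h_0 \bmod \pi$, whose degree equals $\deg g \bmod \pi = d$ precisely because the leading coefficient of $g$ is a unit — so $\deg(\Phi^k h_0) = d$ forces $ke \leq d$. Assembling the two bounds yields $\mu_d = \lfloor d/e \rfloor$ for $0 \leq d \leq qe$.
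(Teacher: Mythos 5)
Your overall route is the paper's: both proofs reduce the computation of $\mu_d$ to a degree count in the null ideal $N_k$ via Lemma \ref{nullideal and intvalpolynomials}, and both get the degree count from Theorem \ref{Phi thm} (the paper phrases it through the minimal monic polynomial $\phi_k$, whose degree is $k\deg\Phi_{q,n}$ for $k\le q$; you use the explicit generators $(\Phi_{q,n}^k,\pi\Phi_{q,n}^{k-1},\dots,\pi^{k-1}\Phi_{q,n})$, which amounts to the same thing). The lower bound is fine. In the upper bound there is a slip: ``in lowest terms the leading coefficient is a unit'' is false --- the condition $\pi\nmid g$ only says that \emph{some} coefficient of $g$ is a unit (consider $g=\pi x^d+1$), so you may not assume $\deg(g\bmod\pi)=d$. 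This is harmless, because you do not need it: from $g\equiv\Phi_{q,n}^k h_0\pmod{\pi}$ with $h_0\not\equiv 0\bmod\pi$ you get $d=\deg g\ge\deg(g\bmod\pi)\ge k\deg\Phi_{q,n}$, which is the inequality you want. With that one-line repair your argument covers every $d$ with $\lfloor d/\deg\Phi_{q,n}\rfloor+1\le q$, i.e.\ all $d<q\deg\Phi_{q,n}$, and matches the paper.

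The genuine gap is at the endpoint $d=q\deg\Phi_{q,n}$, exactly where you sensed trouble: there one must rule out $\mu_d\ge q+1$, i.e.\ show that $N_{q+1}$ contains no polynomial of degree $q\deg\Phi_{q,n}$ with nonzero reduction mod $\pi$; equivalently, $\deg\phi_{q+1}>q\deg\Phi_{q,n}$. Neither of your suggested fixes delivers this. The claim that ``iterating gives degree $\ge(q+1)\deg\Phi_{q,n}$'' for elements of $N_{q+1}$ is false: the Example following Theorem \ref{Phi thm} exhibits a monic $\psi\in N_{q+1}$ of degree $(q+1)\deg\Phi_{q,n}-q$, which is precisely why Theorem \ref{Phi thm} breaks down at $k=q+1$. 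And invoking \cite[Thm.\ 5.4]{Wer2} only reduces the question to a lower bound on $\deg\phi_{q+1}$, which is the thing to be proved; the reduction-mod-$\pi^q$ argument gives only $\deg\phi_{q+1}\ge\deg\phi_q$, and strictness is not automatic (for $n=1$ one actually has $\deg\phi_{q+1}=\deg\phi_q=q^2$, so the analogous endpoint statement fails there). To be fair, the paper's own proof is silent on this point as well --- it asserts $\mu_{\deg\phi_k}=k$ for all $k$, tacitly using $\deg\phi_{k+1}>\deg\phi_k$ --- but your explicit justifications for the endpoint are the ones that do not hold up, so the case $d=q\deg\Phi_{q,n}$ remains open in your write-up.
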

\begin{proof}
The polynomial $g(x)/\pi^k \in K[x]$ (where $g \in V[x]$ and $\pi$ does not divide $g$) is in $\Int_K(M_n(V))$ if and only if $g(x)$ mod $\pi^k$ is in the null ideal $N_k$ (Lemma \ref{nullideal and intvalpolynomials}). Moreover, $\mu_d$ is equal to the maximum $k$ such that there exists $g(x)/\pi^k \in \Int_K(M_n(V))$ of degree $d$. It follows that, for any $k > 0$, we have $\mu_{\deg \phi_k} = k$, and $\mu_d < k$ for $d < \deg \phi_k$. By Theorem \ref{Phi thm}, $\deg \phi_k = k \deg \Phi_{q,n}$ for $1 \leq k \leq q$. Hence, the sequence $\mu_d$ begins
\begin{equation*}
\underbrace{0, \ldots, 0}_{\deg \Phi_{q, n} \text{ terms}}, \; \underbrace{1, \ldots, 1}_{\deg \Phi_{q, n} \text{ terms}}, \; \ldots, \; \underbrace{q-1, \ldots, q-1}_{\deg \Phi_{q, n} \text{ terms}}, \; q,
\end{equation*}
which matches the stated formula.
\end{proof}

In general, it is harder to describe the $\pi$-sequence $\lambda_d$ of $\Int_K(\Lambda_n(V))$. Thankfully, formulas for the case $n = 2$ and $V = \Z_{(p)}$ are given in \cite{EvrJoh}, and we can use these to show that our polynomial $F$ is optimal in that case.

\begin{Cor}\label{When F is optimal}
Let $p$ be a prime of $\Z$. Then, the polynomial $F$ given by Construction \ref{The construction} for $\Int_\Q(M_2(\Z_{(p)}))$ is optimal.
\end{Cor}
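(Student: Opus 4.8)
The plan is to compute the degree of $F$ from Construction \ref{The construction}, identify it with the first index $d$ where $\mu_d < \lambda_d$, and invoke the characterization of optimality stated earlier in this section. Recall that once we know the smallest $d$ with $\mu_d < \lambda_d$, any properly integral polynomial of degree $d$ is optimal; so it suffices to show (i) $\deg F$ equals that critical index, and (ii) $F$ is properly integral of that degree, which we already have from Theorem \ref{Big thm}. Thus everything reduces to a degree bookkeeping argument combined with the two $\pi$-sequence formulas available for $n = 2$, $V = \Z_{(p)}$.

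First I would compute $\deg F$ explicitly when $n = 2$ and $q = p$. From Construction \ref{The construction}, $h(x) = x^{n-1}\prod_{a \in \F_q^\times}(x^n + \pi a)$ has degree $(n-1) + n(q-1)$, which for $n = 2$ is $1 + 2(p-1) = 2p - 1$. Then $H(x) = \prod_{b \in \F_q} h(x - b)$ has degree $q(2p-1) = p(2p-1)$. By Theorem \ref{BCL Theorem}(3), $\deg \theta = \deg \Phi_{q,n} - \deg\big(\prod_{a \in \F_q}(x-a)^n\big) = \deg \Phi_{q,n} - nq$; for $n = 2$, $\deg \Phi_{p,2} = \deg(x^{p^2}-x)(x^p - x) = p^2 + p$, so $\deg \theta = p^2 + p - 2p = p^2 - p = p(p-1)$. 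Hence $\deg\big((\theta)^q\big) = p \cdot p(p-1) = p^2(p-1)$, and $\deg F = \deg H + \deg (\theta^q) = p(2p-1) + p^2(p-1) = p^3 + p^2 - p$. Now compare with $q \cdot \deg \Phi_{p,2} = p(p^2 + p) = p^3 + p^2$: we get $\deg F = p^3 + p^2 - p = q\deg\Phi_{q,2} - p$, which lies strictly inside the range $0 \le d \le q \deg \Phi_{q,2}$ where Corollary \ref{Formula for mu_d} applies. Therefore $\mu_{\deg F} = \lfloor \deg F / \deg \Phi_{p,2}\rfloor = \lfloor (p^3 + p^2 - p)/(p^2 + p)\rfloor = p - 1$ (since $(p-1)(p^2+p) = p^3 - p < p^3 + p^2 - p < p^3 + p^2 = p(p^2+p)$). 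On the other hand, $F = G/\pi^q$ is properly integral with $\pi \nmid G$, so $\lambda_{\deg F} \ge q = p > p - 1 = \mu_{\deg F}$, confirming $\mu_{\deg F} < \lambda_{\deg F}$.

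Next I would show $\deg F$ is the \emph{smallest} index with $\mu_d < \lambda_d$. For this I would use Corollary \ref{lambda and p-sequence} (i.e. Corollary \ref{When F is optimal}'s companion, $\lambda_d$ equals the $p$-sequence of $\Int_\Q(R_{2,p})$ from \cite[Cor.\ 2.17]{EvrJoh}) together with Corollary \ref{Formula for mu_d}. For $d < \deg F$ we must check $\mu_d = \lambda_d$. By Corollary \ref{Formula for mu_d}, for $d$ in the relevant range $\mu_d = \lfloor d/(p^2+p)\rfloor$, so $\mu_d \le p - 1$ whenever $d < p(p^2+p) = p^3 + p^2$, and in particular for all $d \le \deg F$. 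The point is then to read off from the Evrard--Johnson recursion that $\lambda_d$ first exceeds $\lfloor d/(p^2+p)\rfloor$ exactly at $d = p^3 + p^2 - p$; equivalently, that the $p$-sequence of $\Int_\Q(R_{2,p})$ agrees with $\lfloor d/(p^2+p)\rfloor$ for $0 \le d \le p^3 + p^2 - p - 1$ and equals $p$ at $d = p^3 + p^2 - p$. I would extract this from \cite[Cor.\ 2.17]{EvrJoh} by a direct inspection of the initial segment of their formula (the first ``jump'' of size forcing $\lambda_d > \mu_d$ occurs precisely when the $P$-ordering over $\Lambda_2(V)$ can exploit the extra common divisibility among companion matrices with characteristic polynomial $\equiv (x-a)^2 \bmod \pi$, which is exactly the phenomenon encoded by $F$).

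**The main obstacle** will be step three: matching $\deg F = p^3 + p^2 - p$ against the explicit recursive $p$-sequence formula of \cite[Cor.\ 2.17]{EvrJoh} and verifying that no earlier index produces a strict inequality $\mu_d < \lambda_d$. This is not conceptually deep but requires care, since the Evrard--Johnson formulas for $\lambda_d$ are piecewise/recursive and one must correctly identify their first deviation from the floor function $\lfloor d/(p^2+p)\rfloor$; a sign error or off-by-one in their indexing would break the argument. Everything else --- the degree computation for $F$, the evaluation of $\mu_{\deg F}$ via Corollary \ref{Formula for mu_d}, and the final appeal to the optimality criterion stated at the start of the section --- is routine once the comparison with the $\lambda_d$ formula is pinned down. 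I would therefore devote the bulk of the written proof to the careful bookkeeping in step three, presenting steps one and two as short preliminary computations.
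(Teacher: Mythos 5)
Your proposal follows essentially the same route as the paper: compute $\deg F = p^3+p^2-p$, use Corollary \ref{Formula for mu_d} to get $\mu_d = \lfloor d/(p^2+p)\rfloor$ in the relevant range, and verify from the Evrard--Johnson recursion that $d = p^3+p^2-p$ is the first index where $\lambda_d$ exceeds $\mu_d$. The paper likewise relegates the $\lambda_d$ verification to an omitted ``elementary, but tedious, calculation'' from \cite[Prop.\ 2.13, Prop.\ 2.10, Cor.\ 2.17]{EvrJoh}, so the one step you flag as the main obstacle is exactly the step the paper also does not write out; your explicit degree bookkeeping and the observation that $\lambda_{\deg F} \geq p$ follows directly from $F$ itself are correct.
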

\begin{proof}
The degree of $\Phi_{p, 2}$ is $p^2+p$, so Corollary \ref{Formula for mu_d} tells us that the $p$-sequence $\mu_d$ of $\Int_\Q(M_2(\Z_{(p)}))$ satisfies $\mu_d = \lfloor d/(p^2+p) \rfloor$ for $0 \leq d \leq p^3+p^2$. The $p$-sequence $\lambda_d$ of $\Int_\Q(\Lambda_2(\Z_{(p)}))$ can be computed via recursive formulas given in \cite[Prop.\ 2.13, Prop.\ 2.10, Cor.\ 2.17]{EvrJoh}. An elementary, but tedious, calculation (which we omit for the sake of space) shows that for $0 \leq d < p^3 + p^2 -p$, we have $\lambda_d = \lfloor d/(p^2 + p) \rfloor < p$, and $\lambda_{p^3+p^2-p} = p$. Thus, the smallest $d$ for which $\mu_d < \lambda_d$ is $d = p^3+p^2-p$. A routine computation shows that $\deg F = p^3 + p^2 - p$, so we conclude that $F$ is optimal.
\end{proof}

It is an open problem to determine whether Corollary \ref{When F is optimal} holds in the general case.

\begin{Ques}
Let $V$ be a DVR with fraction field $K$ and residue field $\F_q$, and let $n \geq 2$. Is the polynomial $F$ given by Construction \ref{The construction} optimal? To prove this, it would suffice to show that $\lambda_d=\mu_d$ for all $d < \deg F = q \deg \Phi_{q, n} - q$. We will not include the proof, but we have been able to determine that $\lambda_d = \mu_d = 0$ for $0 \leq d < \deg \Phi_{q, n}$. However, we have not been able to prove that equality holds for larger $d$ (although we suspect that this is the case).
\end{Ques}

\section*{Acknowledgements}
\noindent This research has been supported by  the grant ``Assegni Senior'' of the University of Padova. The authors wish to thank the referee for several suggestions which improved the paper.
\vskip0.8cm

\end{document}